\newtheorem{Teo}{Theorem}[section]
\newtheorem*{theorem*}{Theorem}
\newtheorem{Lema}[Teo]{Lemma}
\newtheorem{Obs}[Teo]{Remark}
\newtheorem{Def}[Teo]{Definition}
\newtheorem{Cor}[Teo]{Corollary}
\newtheorem{Que}[Teo]{Question}
\newtheorem{Prop}[Teo]{Proposition}
\newcommand{\VR}{\mathcal{O}}
\newtheorem{lem-def}[Teo]{Lemma-Definition}
\DeclareRobustCommand\longtwoheadrightarrow
\renewenvironment{proof}{{\bfseries Proof.}}{\qed}
\newcommand{\Lra}{\Longrightarrow}
\newcommand{\R}{\mathbb R}
\newcommand{\N}{\mathbb N}
\newcommand{\Z}{\mathbb Z}
\newcommand{\Q}{\mathbb Q}
\newcommand{\F}{\mathbb F}
\def\op{\operatorname}
\def\ars#1{\renewcommand\arraystretch{#1}}
\def\bs{\vskip.5cm}
\def\ps{\partial_s}
\def\dep{\op{\mbox{\rm\small depth}}}
\def\diso{\lower.4ex\hbox{$\downarrow$}\raise.4ex\hbox{\mbox{\scriptsize
$\wr$}}}
\def\dta{\delta}
\def\e{\medskip}
\newcommand{\Lr}{\ \Longrightarrow\ }
\def\ep{\epsilon}
\def\ep#1{\exp(\Pi i#1)}
\def\ep{\epsilon}
\def\g{\Gamma}
\def\ga{\gamma}
\def\gal{\op{Gal}}
\def\gi{\g_{\infty}}
\def\ism{\lower.3ex\hbox{\ars{.08}$\begin{array}{c}\,\to\\\mbox{\tiny $\sim\,$}\end{array}$}}
\def\iso{\ \lower.3ex\hbox{\ars{.08}$\begin{array}{c}\lra\\\mbox{\tiny $\sim\,$}\end{array}$}\ }
\def\kb{\overline{K}}
\def\kx{K[x]}
\def\lg{l\raise.6ex\hbox to.2em{\hss.\hss}l}
\def\lra{\,\longrightarrow\,}
\def\orb{\hbox to  .3em{$\backslash$}\backslash}
\def\sii{\ \Longleftrightarrow\ }
\def\sii{\quad\Longleftrightarrow\quad}
\def\sub{\subseteq}
\def\supp{\op{supp}}
\def\t{\theta}
\def\vt{v_{\theta}}
\newcounter{cs}
\newcommand{\casos}{\begin{itemize}}
\newcommand{\fcasos}{\end{itemize}\setcounter{cs}{1}}
\newfont{\tit}{cmr12 scaled \magstep3}
\title{On the distances of an element to its conjugates}
\subjclass[2010]{Primary 13A18; Secondary 12J20, 13J10, 14E15}
\author[Josnei Novacoski]{Josnei Novacoski}
\address{Departamento de Matem\'{a}tica,         Universidade Federal de S\~ao Carlos, Rod. Washington Luís, 235, 13565--905, S\~ao Carlos -SP, Brazil}
\email{josnei@ufscar.br}
\thanks{During the realization of this project the author was supported by a grant from Funda\c{c}\~ao de Amparo \`a Pesquisa do Estado de S\~ao Paulo (process number 2024/08989-6) and a grant from Conselho Nacional de Desenvolvimento Cient\'ifico e Tecnol\'ogico (process number 303215/2022-4).}
\keywords{Artin-Schreier extension, defect, depth, Henselian field, Okutsu sequence, valuation, ramification ideals}
\begin{document}
\subjclass[2010]{13A18 (12J10)}

\begin{abstract}
For a valued field $(K,v)$, with a fixed extension of $v$ to the algebraic closure $\overline K$ of $K$, and an element $\theta\in\overline K$, we are interested in the possible values of $\theta-\theta'$ where $\theta'$ runs through all the $K$-conjugates of $\theta$. The study of these values is a classic problem in number theory and ramification theory. However, the classic results focus on tame, and in particular defectless, extensions. In this paper we focus on the study of defect extensions. We want to compare the number of such values to invariants of $\theta$. The main invariant we have in mind is the depth of $\theta$. We present various examples that show that, in the defect case, none of the equivalent of the classic results are true. We also discuss the relation between the number of such values and the number of ramification ideals of the extension $(K(\theta)/K,v)$. In order to do so, we present some results about ramification ideals that have interest on their own.
\end{abstract}

\maketitle

\section{Introduction}

Let $K$ be a field with a fixed algebraic closure $\overline K$, fix a valuation $v$ on $\overline K$ and set $\Gamma:=v\overline K$. For a separable element $\theta\in\overline K$ we are interested in the set
\[
S_\theta=\{v(\theta'-\theta)\mid \theta'\mbox{ is a }K\mbox{-conjugate of }\theta\mbox{ and }\theta\neq \theta'\}.
\]
The \textbf{Krasner's constant} $\omega(\theta)$ is, by definition, the maximum of $S_\theta$. It follows from the definition that $\#S_\theta=1$ if and only if
\begin{equation}\label{Kuhlque}
v(\theta-\theta')=\omega(\theta)\mbox{ for every conjugate }\theta'\mbox{ of }\theta,\theta'\neq \theta.
\end{equation}

We want to compare the cardinality of $S_\theta$ with natural invariants of $\theta$. The main number we have in mind is the \emph{depth} of $\theta$. For $\t\in\overline{K}$ we consider the valuation $v_\theta$ on $K[x]$ defined as
\[
\vt(f)=v(f(\theta)).
\]
The length  of the \emph{Mac Lane-Vaqui\'e (MLV) chains} of $\vt$  is said to be the \textbf{depth} of  $\vt$,  denoted by $\dep(\t)$ (see Section \ref{secOS} for more details). This concept is not intrinsically associated to $\mathcal E:=(L/K,v)$. Different  generators of the same extension may have different depths. This leads to defining the depth of a simple algebraic extension of valued fields as
\[
\dep(\mathcal E):=\min\{\dep(\t)\mid L=K(\t)\}.
\]


We will say that $\theta$ \textbf{is pure} (or that \textbf{$\mathcal E$ is pure in $\theta$}) if $\dep(\theta)=1$.
\begin{Obs}
All the elements satisfying the definition of pure in \cite{CKR} and \cite{NS2026}, satisfy the above condition.
\end{Obs}

We are also interested in the \textbf{main invariant of $\theta$}, which is the \emph{cut} $\delta(\theta)$ of $\Gamma$ whose left cut set is the smallest \emph{initial segment} of $vL$ containing
\[
\{v(\theta-b)\mid b\in \overline K\mbox{ and }\deg_K(b)<\deg_K(\theta)\}.
\]

The first goal of this paper is to discuss the following questions.
\begin{Que}\label{mainconj}
For a separable $\theta\in \overline{K}$, when do we have
\[
\#S_\theta=\dep(\theta)?
\]
\end{Que}
A particular case of the above question is the following question, which was proposed by Franz-Viktor Kuhlmann.
\begin{Que}\label{kuhlmaconj}
Is it true that if a separable element $\theta\in\overline{K}$ is pure, then \eqref{Kuhlque} is satisfied?
\end{Que}

For a field $L$, $K\subseteq L\subseteq \overline K$, denote by $\VR_L$ the valuation ring, by $vL$ the value group and by $Lv$ the residue field of $(L,v)$. Also, denote by $\mathcal M_L$ the maximal ideal of $\VR_L$. For a subset $S$ of $vL$ we consider the cut $S^-$ of $vL$ as the cut whose left cut set is
\[
\{\gamma\in vL\mid \gamma\leq s\mbox{ for every }s\in S\}.
\]
For an element $\gamma\in\Gamma$ we denote $\gamma^-=\{\gamma\}^-$. The following question was proposed in \cite{OkS}.

\begin{Que}\label{connart}
Take a separable element $\theta\in\overline{K}$ and set $L=K(\theta)$. Is it true that if $Lv/Kv$ is separable and $(vL:vK)$ is not divisible by ${\rm char}(Kv)$, then
\[
\delta(\theta)=\omega(\theta)^-?
\]
\end{Que}

These questions have been extensively studied in the \emph{defectless case} (see Section \ref{notation} for the definition and \cite{NN} for the main properties of the defect). For instance,  each of the works \cite{Kan1}, \cite{Kan2}, \cite{Kan3}, \cite{Kan4}, \cite{Kan5}, \cite{OkS} and \cite{OS}, to cite a few, treat some particular case of the questions above.

It is well-known that if $\theta$ is \emph{tame} over $K$, then all the above questions have an afirmative answer (see Section \ref{tamecase}). Tame extensions are, by definition, defectless. Hence, it is natural to ask what happens in the \emph{defect case}.

The first consequence of this paper is that in the defect case the situation is very different. More precisely, in Section \ref{mainandnart} we present an example of a field $K$ and separable elements $\beta,\theta\in \overline K\setminus K$ for which
\begin{equation}\label{firstexampl}
\#S_\theta=1<2=\dep(\theta)
\end{equation}
and $(K(\beta)/K,v)$ is \emph{immediate} (in particular, the conditions of Question \ref{connart} are satisfied) with
\[
\delta(\beta)<0^-=\omega(\beta)^-.
\]

In view of \eqref{firstexampl}, one could ask whether it is true that $\#S_\theta\leq \dep(\theta)$ for every separable $\theta\in\overline K\setminus K$. Observe that if this is true for some $\theta$, then the answer to Question \ref{kuhlmaconj} is also true for such $\theta$. However, in Section \ref{monster} we present an example of a field $K$ and a separable element $\theta$ such that
\[
\#S_\theta=2>1=\dep(\theta).
\] 

A motivation to study the questions above comes from \emph{ramification ideals}. These objects appear as import part of some recent problems in ramification theory of valued fields (for more details we recommend \cite{KR} and \cite{Topics}). For any subset (bounded from below) $S$ of $vL$, we define the $\VR_L$-fractional ideal $I_S$ of $L$ by
\[
I_S=\{c\in L\mid vc\geq s\mbox{ for some }s\in S\}.
\] 
Suppose that $\mathcal E:=(L/K,v)$ is a Galois extension and for $\sigma\in \gal(L/K)$ we denote
\[
I_\sigma=\left\{c\in L\mid vc\geq v\left(\frac{\sigma b-b}{b}\right)\mbox{ for some }b\in L^*\right\},
\]
i.e., $I_\sigma=I_{S_\sigma}$ where
\[
S_\sigma=\left\{v\left(\frac{\sigma b-b}{b}\right)\mid b\in L^*\right\}.
\]
For a subgroup $H$ of $\gal(L/K)$ we define
\[
I_H=\bigcup_{\sigma\in H} I_\sigma.
\]

It might happen that for two distinct subgroups $H$ and $H'$ of $\gal(L/K)$ we have $I_H=I_{H'}$. We will consider the set of $\VR_L$-fractional ideals
\[
{\rm Ram}(\mathcal E):=\{I_H\mid H\mbox{ is a subgroup of }\gal(L/K), H\neq \{id\}\}.
\]
We will refer to the elements of ${\rm Ram}(\mathcal E)$ which are contained in $\mathcal M_L$ as the \textbf{ramification ideals of }$\mathcal E$.

In \cite{KR} it is shown that if $\mathcal E$ is a Galois extension of degree and defect $p$ (where $p$ is a prime number), then
\[
{\rm Ram}(\mathcal E)=\{\mathcal I\},\mbox{ i.e., }\#{\rm Ram}(\mathcal E)=1.
\]
Moreover, for every generator $\theta$ of $L/K$ and every $\sigma\in \gal(L/K)\setminus\{id\}$ we have 
\begin{equation}\label{mainconjst}
\mathcal I=I_{v(\sigma \theta-\theta)-D_1(\theta,K)},\mbox{
where }D_1(\theta,K)=\{v(\theta-c)\mid c\in K\}.
\end{equation}
For a finite set $F$ and an initial segment $D$ of $vL$ we consider the set of final segments of $vL$ defined as
\[
F-D=\{s-D\mid s\in F\}.
\]
Then the above discussion tells us that if $\mathcal E$ is Galois extension of degree and defect $p$, then for every generator $\theta$ of $L/K$ we have 
\[
{\rm Ram}(\mathcal E)=\{I_S\mid S\in S_\theta-D_1(\theta,K)\}.
\]
Hence, it is natural to ask the following question.

\begin{Que}\label{questuiabouraminf}
For a Galois extension $\mathcal E=(L/K,v)$, does there exist a generator $\theta$ of $L/K$ for which
\[
{\rm Ram}(\mathcal E)=\{I_S\mid S\in S_\theta-D_1(\theta,K)\}?
\]
\end{Que}

In Section \ref{depthramifi} we explore the above question. For instance, a consequence of Proposition \ref{linealrydisoj} is that if $(K,v)$ is a Henselian valued field  of rank one (satisfying an extra mild condition), then the compositum $L$ of finitely many \emph{independent} defect AS extensions admits a generator $\theta$ satisfying
\begin{description}
\item[(i)] $\dep(\theta)=1$;
\item[(ii)] ${\rm Ram}(\mathcal E)=\{\mathcal M_L\}$;
\item[(iii)] $\#S_\theta=1$.
\end{description}

However, we present an example of an extension $\mathcal E=(L/K,v)$ where
\[
\#{\rm Ram}(\mathcal E)=1<2=\dep(\mathcal E)
\]
(see Remark \ref{depn3famif1}). Also, Corollary \ref{linealrydisoj} shows that the example of Section \ref{monster} provides an extension $\mathcal E=(L/K,v)$ such that
\[
\dep(\mathcal E)=1<2\leq \#{\rm Ram}(\mathcal E).
\]
Moreover, we present an example of an extension $\mathcal E=(L/K,v)$ and a generator $\theta$ for $L/K$ such that
\[
\#S_\theta=1<2\leq\#{\rm Ram}(\mathcal E)
\]
(see Remark \ref{depn3famif2}). 

Finally, Proposition \ref{generalizakuhlm} indicates that ${\rm Ram}(\mathcal E)$ is closely related to $\min S_\theta$. Namely, for a subgroup $H$ of $\gal(L/K)$ we define
\[
S(\theta,H)=\{v(\sigma \theta-\theta)\mid \sigma\in H\setminus\{id\}\}.
\]
Observe that if $\mathcal E$ is a Galois extension, then $S_\theta=S(\theta, \gal(L/K))$. Proposition \ref{generalizakuhlm} shows that for a unibranched immediate Galois extension $(L/K,v)$ and a subgroup $H$ of order smaller or equal to the \emph{characteristic exponent} of $(K,v)$ we have $I_H=I_S$ where
\[
S=\min S(\theta,H)-D_1(\theta,K_H)
\]
and $K_H$ is the fixed field of $H$. In Section \ref{depthramifi} we also present some interesting consequences of this latter result. Moreover, in Section \ref{computationcassesAsd} we show that for all the extensions $\mathcal E=(L/K,v)$ treated in \cite{NNASD} we have ${\rm Ram}(\mathcal E)=\{\mathcal M_L\}$.

\medskip
\noindent\textbf{Acknowledgement.} This paper was inspired by discussions with Enric Nart, Franz-Viktor Kuhlmann and Mark Spivakovsky. In particular, the computations in Section \ref{computationcassesAsd} were provided by Nart. I specially thank Spivakovsky for finding and fixing a mistake in an earlier version of the example in Section \ref{monster}.

\section{Notation and terminology}\label{notation}
Throughout this paper, we will denote by $\N$ the set of all positive and by $\N_0$ the set of all non-negative integers.
  
We will say that the extension $\mathcal E=(L/K,v)$ is \textbf{unibranched} if $v$ is the only extension of $v_{\mid_{K}}$ to $L$. For a field $L$, $K\subseteq L\subseteq \overline K$, we will denote by $L^h$ the \textbf{henselization} of $L$, i.e., the smallest subfield $L'$, $L\subseteq L'\subseteq \overline K$, such that $(\overline K/L',v)$ is unibranched. A valued field $(K,v)$ is \textbf{Henselian} if $K=K^h$, or equivalently, if $(\overline K/K,v)$ is unibranched.

We will regard (in the natural way) $Kv$ as a subfield of $Lv$. The \textbf{defect} of $\mathcal E$ is the number
\[
d(\mathcal E)=\frac{[L^h:K^h]}{[Lv:Kv]\cdot (vL:vK)}.
\]
We will say that $\mathcal E$ is \textbf{defectless} if $d(\mathcal E)=1$. Otherwise, we will say that $\mathcal E$ is a \textbf{defect extension}. We will call $\mathcal E$ is an \textbf{immediate extension} if
\[
vK=vL\mbox{ and }Lv=Kv.
\]

By an \textbf{Artin-Schreier (abbreviated by AS) extension} we mean a field extension $L/K$ of degree $p={\rm char}(K)>0$ which is generated by an element
\[
\alpha\in \overline K\setminus K\mbox{ such that }\alpha^p-\alpha\in K.
\]
In this case, we will say that $\alpha$ is an \textbf{AS element} over $K$. 

We will use Kuhlmann's characterization (see \cite{Kuhl}) of defect AS extensions as dependent or independent. Since we will deal only with rank one valuations, this classification simply means that $(K(\alpha)/K,v)$ is \textbf{independent} if $d_1(\alpha,K)=0^-$ (using the notation from Section \ref{secOS}). It will is \textbf{dependent} if $d_1(\alpha,K)<0^-$.

For a field $L$, $K\subseteq L\subseteq \overline K$, a subset $S$ of $vL$ will be called a \textbf{final segment} if for every $\gamma\in vL$ we have
\[
\gamma \geq s\in S\Lra \gamma\in S.
\]
We will also use the analogous notion of \textbf{initial segment}. For two subsets $S, S'$ of $vL$ we will consider the Minkowski sum $S+S'$, i.e., 
\[
S+S'=\{s+s'\mid s\in S\mbox{ and }s'\in S'\}.
\]
Also, we denote $-S'=\{-s'\mid s'\in S'\}$ and $S-S':=S+(-S')$.

We will say  that the fields $L_1,\ldots,L_n$, $K\subseteq L_i\subseteq \overline K$, are linearly disjoint over $K$ if for every $i$, $1\leq i\leq r$, we have
\begin{equation}\label{linearlydisj}
L_i\mbox{ and }L_1\cdots L_{i-1}\cdot L_{i+1}\cdots L_n\mbox{ are linearly disjoint over }K.
\end{equation}
If $L_i/K$ is Galois, then \eqref{linearlydisj} is equivalent to
\[
L_i\cap\left(L_1\cdots L_{i-1}\cdot L_{i+1}\cdots L_n\right)=K.
\] 
We will say that  $\alpha_1,\ldots,\alpha_n\in\overline{K}$ are linearly disjoint over $K$ if $K(\alpha_1),\ldots,K(\alpha_n)$ are linearly disjoint over $K$.
\section{Okutsu sequences and depth}\label{secOS}
In this section, we do not make any assumption on the valued field $(K,v)$. Let us still denote by $v$ some fixed extension of $v$ to an algebraic closure $\kb$ of $K$. Denote $\g:=v\kb$.
From now on, we denote $\g\cup\{\infty\}$ simply by $\gi$.

For some $\t\in\kb$, let  $g\in\kx$ be its minimal  polynomial over $K$ and consider the extension $L=K(\t)$ of $K$. S. Mac Lane realized that the properties of the valued field $(L,v)$  could be described in terms of the following valuation on $\kx$:
\[
\vt\colon \kx\lra \g\cup\{\infty\},\qquad f\longmapsto \vt(f)=v(f(\t)).
\]  
Consider  the isomorphism $\kx/(g)\simeq L $ induced by $x\mapsto \t$. Since $\vt^{-1}(\infty)=g\kx$, the valuations  $\vt$ and $v_{\mid L}$ are determined one by each other through
\[
\vt\colon \kx \longtwoheadrightarrow \kx/(g)\stackrel{\sim}\lra L\stackrel{v}\lra \g\cup\{\infty\}.
\]

A celebrated theorem of Mac Lane-Vaqui\'e states that $\vt$ can be constructed from $v$ by means of an \emph{MLV chain}; that is, a finite sequence of \textit{augmentations} of valuations on $\kx$ whose restriction to $K$ is $v$:
\[
v\ \lra\ 	\mu_0\ \lra\  \mu_1\ \lra\ \cdots
	\ \lra\ \mu_{r}=\vt,
\]
satisfying certain natural properties \cite{Vaq, MLV}. The initial augmentation $v\to\mu_0$ is symbolic; it only indicates that $\mu_0$ is a \emph{monomial} valuation. Each of the real augmentations $\mu_n\to\mu_{n+1}$ can be either \emph{ordinary} or \emph{limit}. 
The valuation $\vt$ admits different MLV chains, but all of them have the same length $r$ and the same sequence of characters ordinary/limit of the successive augmentations \cite[Section 4]{MLV}.


Let $n\ge1$ be the degree of $\t$ over $K$. 
For every integer $1\le m\le n$, we define the \textbf{set of distances} of $\t$ to elements in $\kb$ of degree $m$ over $K$ as:
\[
D_m=D_m(\t,K):=\left\{v(\t-b)\mid b\in\kb,\ \deg_Kb=m\right\}\sub\gi.
\] 
Note that $\max(D_n)=\infty$.
The set $D_1(\theta,K)$ was studied by Blaszczok and Kuhlmann (see \cite{B,Kuhl}, for instance) for its connections with defect and immediate extensions. We are interested in $\max\left(D_m\right)$, the maximal distance of $\t$ to elements of a fixed degree over $K$. Since this maximal distance may not exist, we follow \cite{Kuhl} and we replace this concept with an analogous one in the context of cuts in $\g$.

A \textbf{cut} in $\g$ is a pair $\dta=(\dta^L,\dta^R)$ of subsets of $\g$ such that $$\dta^L< \dta^R\quad\mbox{ and }\quad \dta^L\cup \dta^R=\g.$$ 
The inequality $\dta^L< \dta^R$ means that $s<s'$ for all elements $s\in \dta^L$, $s'\in \dta^R$. Note that 
 $\dta^R=\g\setminus\dta^L$. Let us denote by $\op{Cuts}(\g)$ the set of all cuts in $\g$. 

For all $D\sub \g$ we denote by  $D^+$, $D^-$ the cuts determined by
\[
\dta^L=\{\ga\in \g\mid \exists s\in D: \ga\leq s\},\qquad \dta^L=\{\ga\in \g\mid \ga<D\},
\]
respectively.
If $D=\{\ga\}$, then we will write $\ga^+=(\g_{\le \ga},\g_{>\ga})$ instead of $\{\ga\}^+$ and $\ga^-=(\g_{<\ga}, \g_{\ge \ga})$ instead of $\{\ga\}^-$. These cuts are said to be 
\textbf{principal}.

The set $\op{Cuts}(\g)$  is totally ordered with respect to the following ordering:
\[
(\dta^L,\dta^R)\le (\ep^L,\ep^R)\ \sii\ \dta^L\sub \ep^L.
\]
The \textbf{improper} cuts 
$-\infty:=(\emptyset,\g)$, $\infty^-:=(\g,\emptyset)$ 
are the absolute minimal and maximal elements in $\op{Cuts}(\g)$, respectively.

\begin{Def}\label{distance}
For $m<n$, we define $d_m(\t)$ to be the cut $D_m^+\in \op{Cuts}(\g)$.  

We agree that $d_n(\t)$ is the improper cut $\infty^-$.
\end{Def}

For instance, if $D_m$ has a maximal element $\ga\in\g$, then  $d_m(\t)=\ga^+$.

As mentioned  above, the valuation $\vt$ on $\kx$ contains relevant information about the valued field $(L,v)$. This information can be captured as well by certain sequences of sets of algebraic elements. 

We say that a  subset $A\sub\kb$ has a \textbf{common degree} if all its elements have the same degree over $K$. In this case, we shall denote this common degree by $\deg_K A$. 

\begin{Def}\label{defOkS}
	An \textbf{Okutsu sequence} of $\t$ is a finite sequence 
\[
\left[A_0,A_1,\dots,A_{r-1},A_r=\{\t\}\right], 
\]
of common degree subsets of $\kb$ whose degrees grow strictly:
\begin{equation}\label{degOS}	
		1=m_0<m_1<\cdots<m_r=n,\qquad m_\ell=\deg_K A_\ell, \ \ 0\le\ell\le r,
\end{equation}
and	satisfy the following properties for all $\,0\le\ell< r$: \e
	
	(OS0) \ For all $b\in\kb$ such that $\deg_Kb<m_{\ell+1}$, we have $v(\t-b)\le  v(\t-a)$ for some $a\in A_\ell$.\e
	
	
	(OS1) \ $\#A_\ell=1$ whenever $\max\left(D_{m_{\ell}}\right)$ exists.\e
	
	(OS2) \ If $\max\left(D_{m_{\ell}}\right)$ does not exist, then we assume that $A_\ell$ is well-ordered with respect to the following ordering: \ $a<a'\ \sii \ v(\t-a)<v(\t-a')$.\e
	
	(OS3) \ For all $a\in A_{\ell}$, $b\in A_{\ell+1}$, we have $v(\t-a)<v(\t-b)$.
\end{Def}

Let us discuss the existence and construction of Okutsu sequences.
Consider the sequence of minimal degrees of the distances of $\t$:
\begin{equation}\label{degDist}	
1=d_0<d_1<\cdots<d_s=n,
\end{equation}
defined recursively as follows:

$\bullet$ \ $d_0=1$,

$\bullet$ \ for every $\ell\in\N$, $d_\ell$ is the least integer $m>d_{\ell-1}$ such that there exists some $\ep\in D_m$ satisfying $\ep>D_{d_{\ell-1}}$.\e

Now, for each $0\le \ell<  s$ choose subsets $A_\ell\sub\kb$ of common degree $\deg_K A_\ell=d_\ell$ such that
\[
 \{v(\t-a)\mid a\in A_\ell\}\sub D_{d_\ell}
\]
is a well-ordered cofinal subset of $D_{d_\ell}$. Also, if for some $\ell$ there exists $\ga=\max\left(D_{d_\ell}\right)$, then we take $A_\ell=\{a\}$, for some $a\in\kb$ such that $\deg_K a=d_\ell$ and $v(\t-a)=\ga$. 

Finally, for $\ell>0$, we consider in $A_\ell$ only elements $a$ such that $v(\t-a)>D_{d_\ell-1}$. 

Clearly, $\left[A_0,A_1,\dots,A_{r-1},A_r=\{\t\}\right]$ is an Okutsu sequence of $\t$ and, conversely, all Okutsu sequences arise in this way. 

In particular, the sequence (\ref{degOS}) of degrees over $K$ of the sets $A_\ell$ is equal to the canonical sequence (\ref{degDist}) of minimal degrees of the distances of $\t$. That is, $r=s$ and $d_\ell=m_\ell$ for all $0\le \ell<r$.

The link between $\dep(\t)$  and Okutsu sequences of $\t$ is established in
the following theorem, which  was proved in \cite{OS} under the assumption that $(K,v)$ is Henselian. For arbitrary valued fields, it was shown in  \cite{NND} that this  result follows easily from \cite[Theorem 7.2]{NNP}. 

\begin{Teo}\label{OSdepth}
The length $r$ of any Okutsu sequence of $\t$ is equal to $\dep(\t)$. Moreover, for every MLV chain of $\vt$:
\[
v\ \to\ \mu_0\ \to\ \mu_1\ \to\ \cdots \ \to\ \mu_{r-1}\ \to\ \mu_r=\vt, 
\]
and every $0\le \ell<r$,
the augmentation $\mu_\ell\, \to\, \mu_{\ell+1}$ is ordinary if  and only if $D_{m_\ell}$ contains a maximal element.
\end{Teo}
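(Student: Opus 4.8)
The plan is to reduce the theorem to a direct comparison between two constructions that are both known to compute $\dep(\t)$: on one hand the MLV chain of $\vt$, and on the other the Okutsu sequence built explicitly from the distances $D_m(\t,K)$. The statement has two parts: first that $r$ equals $\dep(\t)$, and second the characterization of ordinary versus limit augmentations in terms of whether $D_{m_\ell}$ has a maximal element. As noted in the excerpt, the equality of lengths follows from \cite{OS} in the Henselian case and from \cite{NND} (via \cite[Theorem 7.2]{NNP}) in general, so the first assertion can be cited; the content to be supplied is the dictionary between the combinatorial nature of each augmentation and the order-theoretic behavior of the corresponding distance set. I would therefore spend the bulk of the argument on the second part, and handle the first by invoking the cited length equality together with the already-established fact (in this very section) that the degree sequence $(m_\ell)$ of any Okutsu sequence coincides with the canonical sequence $(d_\ell)$ of minimal degrees of distances.

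First I would fix an MLV chain $v\to\mu_0\to\cdots\to\mu_r=\vt$ and recall that each augmentation $\mu_\ell\to\mu_{\ell+1}$ is attached to a key polynomial $\phi_{\ell+1}$ whose $\vt$-value is $\mu_{\ell+1}(\phi_{\ell+1})=\vt(\phi_{\ell+1})$, and whose degree $\deg\phi_{\ell+1}=m_{\ell+1}$; the point of the Mac Lane--Vaqui\'e theory is that $\phi_{\ell+1}$ is an optimal approximation to $\t$ among polynomials of degree $<m_{\ell+1}$, in the sense that (up to the usual normalizations) $v(\t-a)$ for roots $a$ of $\phi_{\ell+1}$ realizes the supremum of $\{v(\t-b):\deg_K b<m_{\ell+1}\}=D_{m_\ell}^{\le}$. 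The ordinary case is exactly the case where this optimal approximation is attained by a single key polynomial, equivalently where $D_{m_\ell}$ has a maximum (realized by a root of $\phi_{\ell+1}$); the limit case is precisely where no single polynomial of degree $m_\ell$ achieves the supremum and one must pass to a limit of an infinite family — which on the distance side is exactly the statement that $D_{m_\ell}$ has no maximal element but only a well-ordered cofinal family, matching (OS2). So the core of the proof is to make this correspondence precise: translate ``$\mu_\ell\to\mu_{\ell+1}$ is ordinary'' into ``$\phi_{\ell+1}$ can be chosen of degree $m_\ell$ with $\vt(\phi_{\ell+1})=\mu_\ell(\phi_{\ell+1})$ maximal'' and then into ``$\max D_{m_\ell}$ exists,'' using the relation between $\vt$-values of polynomials of degree $m_\ell$ and distances $v(\t-b)$ for $\deg_K b=m_\ell$.

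Concretely, I would proceed in the following steps. (1) Recall from Mac Lane--Vaqui\'e theory the classification of augmentations and the structure of the key polynomial $\phi_{\ell+1}$ in each case, in particular that in the ordinary case $\phi_{\ell+1}$ has degree $m_\ell$ while in the limit case the limit key polynomial has strictly larger degree $m_{\ell+1}>m_\ell$ and arises from a continuous family $(\phi_i)_i$ of degree-$m_\ell$ polynomials of strictly increasing $\vt$-value with no last term. (2) Establish, for each $\ell$, the identity relating the relevant segment of $D_{m_\ell}$ to the set of values $\{\mu_\ell(x-b):\deg_K b=m_\ell,\ \text{$b$ close to $\t$}\}$ — this is where one uses that $\vt(x-b)=v(\t-b)$ and that for $b$ approximating $\t$ well enough $\mu_\ell$ and $\vt$ agree on $x-b$. (3) Conclude: ordinary $\Leftrightarrow$ the supremum of $D_{m_\ell}$ is attained by (a root of) a single degree-$m_\ell$ polynomial $\Leftrightarrow$ $\max D_{m_\ell}$ exists, and dually for limit. (4) For the length statement, combine the already-cited equality $r=\dep(\t)$ with the matching of degree sequences $m_\ell=d_\ell$ so that the Okutsu sequence we built indeed has length $r$; since by Definition~\ref{defOkS} and the construction every Okutsu sequence has this same degree sequence, all have length $r=\dep(\t)$.

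The main obstacle I anticipate is step (2): carefully pinning down, with the correct normalization, the precise relationship between $\vt$-values (equivalently $\mu_\ell$-values) of polynomials $x-b$ with $\deg_K b = m_\ell$ and the distance values $v(\t-b)$, and showing that the optimizing behavior on one side transfers exactly to the existence-of-maximum behavior on the other — including the subtle point that in the limit case one really cannot do better with any polynomial of degree $m_\ell$ (not merely with the ones in the chosen family). Handling this cleanly likely requires quoting the finer structural results of \cite{MLV} (or \cite{NNP}) about depth and key polynomials rather than re-deriving them, and the bookkeeping of which values live in $\g$ versus in the value group of $\mu_\ell$ will need care.
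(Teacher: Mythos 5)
The paper gives no proof of Theorem \ref{OSdepth}: it is quoted from \cite{OS} (for Henselian $(K,v)$) and from \cite{NND}, where it is derived from \cite[Theorem 7.2]{NNP}, so there is no in-paper argument to measure your proposal against. Your plan is essentially the paper's own treatment --- cite the length equality and unfold the ordinary/limit dictionary --- and the dictionary you describe (the augmentation $\mu_\ell\to\mu_{\ell+1}$ is ordinary exactly when the optimal approximation of $\theta$ in degree $m_\ell$ is attained, i.e.\ when $\max D_{m_\ell}$ exists, and is a limit augmentation exactly when one needs a cofinal well-ordered family as in (OS2)) is indeed the content of the cited results. One correction to your step (1): in an MLV chain the key polynomial $\phi_{\ell+1}$ attached to $\mu_\ell\to\mu_{\ell+1}$ has degree $m_{\ell+1}$ in \emph{both} cases; what distinguishes a limit augmentation is that $\phi_{\ell+1}$ is a limit key polynomial of a continuous family of augmentations of $\mu_\ell$ whose members have stable degree $m_\ell$, not that $\deg\phi_{\ell+1}$ drops to $m_\ell$ in the ordinary case. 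As written, step (1) contradicts your own second paragraph (where you correctly take $\deg\phi_{\ell+1}=m_{\ell+1}$), and the slip would surface in step (3) if you carried out the details. The genuinely delicate point you flag in step (2) --- that in the limit case no polynomial of degree $m_\ell$ whatsoever, not merely none in the chosen family, realizes the supremum --- is exactly what \cite[Theorem 7.2]{NNP} and the structural results of \cite{MLV} supply, so deferring to them there is consistent with how the paper (and \cite{OS}, \cite{NND}) handle it.
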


The next result follows directly from from \cite[Theorem 1.2]{NS2018}, the results in \cite{Kap} and the theorem above. 
\begin{Cor}\label{corkaplansky}
Assume that ${\rm char}(K)=p>0$ and take $\theta\in \overline K\setminus K$. 
If $D_1(\theta,K)$ does not contain a maximal element, then $d_1$ is a power of $p$. Moreover, if $d_1=p$, then there exists $\epsilon\in D_p$ such that the minimal polynomial of $\epsilon$ over $K$ is of the form
\[
x^p-c\mbox{ or }x^p-cx-d.
\]
\end{Cor}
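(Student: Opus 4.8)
The plan is to reduce the statement to the structure theory of degree-$p$ defect (immediate) extensions in characteristic $p$, and then to invoke Kaplansky's results on pseudo-Cauchy sequences together with \cite[Theorem 1.2]{NS2018}.

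First I would extract a convenient witness of the jump $d_1$. Since $D_1(\theta,K)=\{v(\theta-b)\mid b\in K\}$ has no maximal element, the recursion defining (\ref{degDist}) has a genuine first jump $d_0=1<d_1\le n$, so there is $\epsilon\in\kb$ with $\deg_K\epsilon=d_1$ and $v(\theta-\epsilon)>D_1(\theta,K)$. For every $b\in K$ this gives $v(\epsilon-b)=\min\{v(\theta-\epsilon),v(\theta-b)\}=v(\theta-b)$, so $D_1(\epsilon,K)=D_1(\theta,K)$; in particular $D_1(\epsilon,K)$ has no maximal element. If some $\epsilon'$ with $1<\deg_K\epsilon'<d_1$ satisfied $v(\epsilon-\epsilon')>D_1(\epsilon,K)$, then $v(\theta-\epsilon')\ge\min\{v(\theta-\epsilon),v(\epsilon-\epsilon')\}>D_1(\theta,K)$, contradicting minimality of $d_1$; hence the first jump of $\epsilon$ equals $\deg_K\epsilon$, the Okutsu sequence of $\epsilon$ has length $1$, and by Theorem \ref{OSdepth} $\dep(\epsilon)=1$. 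Applying Theorem \ref{OSdepth} again to $\epsilon$ (whose $D_{m_0}=D_1(\epsilon,K)$ has no maximum), every MLV chain of $v_\epsilon$ is of the form $v\to\mu_0\to\mu_1=v_\epsilon$ with $\mu_0$ a monomial valuation (so $m_0=1$) and $\mu_0\to\mu_1$ a \emph{limit} augmentation; thus the minimal polynomial $\phi_1$ of $\epsilon$ over $K$, of degree $d_1$, is a limit key polynomial limiting a family of degree-one key polynomials $x-b_i$, $b_i\in K$.

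Now I would translate to pseudo-Cauchy sequences. The family $\{x-b_i\}$ gives a pseudo-Cauchy sequence $(b_i)$ of algebraic type in $(K,v)$ with pseudo-limit $\epsilon$, and by \cite[Theorem 1.2]{NS2018} the integer $d_1$ is the least degree of a polynomial $q\in\kx$ for which $(v(q(b_i)))_i$ is eventually strictly increasing. Kaplansky's analysis of such sequences in characteristic $p$ \cite{Kap} shows this minimal degree is a power of $p$, and since $d_1>1$ we obtain $d_1=p^k$ with $k\ge1$, which proves the first assertion. If $d_1=p$, then $K(\epsilon)/K$ is an immediate extension of degree $p$ (otherwise, being of prime degree it would be defectless with $(e,f)=(p,1)$ or $(1,p)$, forcing $D_1(\epsilon,K)$ to attain its maximum). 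By Kaplansky's description \cite{Kap} of immediate degree-$p$ extensions in characteristic $p$, $K(\epsilon)/K$ is generated by an element whose minimal polynomial over $K$ has the form $x^p-c$ (purely inseparable case) or, after an affine change of variable, $x^p-cx-d$ (Artin-Schreier case).

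The remaining point, which I expect to be the main obstacle, is to check that the normalized generator can be taken inside $D_p$, i.e. to still satisfy $v(\theta-\cdot)>D_1(\theta,K)$, so that it genuinely realizes the jump $d_1$. Kaplansky's normalization produces the new generator from $\epsilon$ by successive replacements of the form $\lambda\epsilon+\mu$ with $\lambda,\mu\in K$ and by cofinally sharper approximations inside $K(\epsilon)$; one must verify that, because $D_1(\theta,K)=D_1(\epsilon,K)$ is an initial segment without a maximal element, none of these moves spoils the inequality $v(\theta-\epsilon)>D_1(\theta,K)$. Apart from this, the argument is routine bookkeeping between the cut-and-distance formalism of Section \ref{secOS} and the pseudo-Cauchy-sequence formalism of \cite{Kap,NS2018}.
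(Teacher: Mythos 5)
Your route is the same as the paper's, which offers no written proof beyond the citation of Theorem \ref{OSdepth}, \cite[Theorem 1.2]{NS2018} and \cite{Kap}; your first two paragraphs correctly expand that citation chain (the reduction to a length-one Okutsu sequence with a limit augmentation, the translation into a pseudo-Cauchy sequence $(b_i)$ in $K$ of algebraic type with pseudo-limit $\epsilon$, and the invocation of Kaplansky for the power-of-$p$ degree bound and the special form of the polynomial). The one step you leave open is not actually an obstacle, and you are attacking it from the wrong end: there is no need to ``normalize'' the generator $\epsilon$ you first constructed and then track whether the moves preserve $v(\theta-\epsilon)>D_1(\theta,K)$. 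Instead, once Kaplansky produces a polynomial $f$ of the form $x^p-c$ or $x^p-cx-d$ which has the minimal degree $p$ among polynomials whose values are not eventually fixed along $(b_i)$, his accompanying theorem (the one asserting that any such minimal-degree polynomial admits a root which is itself a pseudo-limit of the sequence, generating an immediate extension) gives a root $\epsilon$ of $f$ with $v(\epsilon-b_i)=v(\theta-b_i)$ cofinally in $D_1(\theta,K)$. The ultrametric inequality then yields $v(\theta-\epsilon)\geq v(\theta-b_i)$ for all $i$, hence $v(\theta-\epsilon)>D_1(\theta,K)$ because $D_1(\theta,K)$ has no maximum; since $d_1=p$ is the least degree of an element realizing this jump, $\deg_K\epsilon=p$, $f$ is its minimal polynomial, and $v(\theta-\epsilon)\in D_p$ as required. (Incidentally, the immediacy of $K(\epsilon)/K$ also comes from this theorem of Kaplansky rather than from your parenthetical defectless argument, which as stated would need the extension to be unibranched.)
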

\section{The tame case}\label{tamecase}
The next result appears in \cite[Theorem 4.5]{OS} and \cite[Theorem 3.4]{OkS} and it shows that tame elements give affirmative answers to Questions \ref{mainconj} and \ref{kuhlmaconj}. Similar results can be deduced from \cite{Kan1}, \cite{Kan2}, \cite{Kan3}, \cite{Kan4} and \cite{DuKuh}.

Suppose that $\theta\in\overline{K}\setminus K$ is separable, defectless and unibranched over $K$. Fix an Okutsu sequence
\[
[\alpha_0,\alpha_1,\ldots, \alpha_r=\theta]
\]
of degrees $1=m_0\mid m_1\mid\ldots\mid m_{r-1}\mid m_r=n:=\deg_K(\theta)$ for $\theta$. Denote 
\[
\delta_{-1}=\infty<\delta_0:=v(\theta-\alpha_0)<\ldots< \delta_{r-1}:=v(\theta-\alpha_{r-1})<\delta_r=\infty
\]
We denote by $\{\delta_0^{t_0},\ldots, \delta_{r-1}^{t_{r-1}}\}$ the multiset whose underlying set is $\{\delta_0,\ldots,\delta_{r-1}\}$ and each $\delta_i$ appears with multiplicity $t_i$.

A field extension $(L/K,v)$ is \textbf{tame} if it is unibranched, defectless, the field extension $Lv/Kv$ is seprable and the characteristic of $Kv$ does not divide $(vL:vK)$. For $\theta\in\overline K$ we say that $\theta$ is tame if $(K(\theta)/K,v)$ is tame.

\begin{Teo}\cite[Theorem 4.5]{OS}
If $\theta$ is tame over $K$, then the following multisets of cardinality $n-1$ coincide
\[
S_\theta=\{\delta_0^{t_0},\ldots, \delta_{r-1}^{t_{r-1}}\}
\]
where $t_i=(n/m_i)-(n/m_{i+1})$.
\end{Teo}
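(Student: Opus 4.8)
The plan is to reduce everything to the Henselian, separable, defectless setting and then compute the conjugates of $\theta$ by locating them along the ``Okutsu approximation chain'' determined by the sets $A_\ell$. First I would pass to the henselization: since $\theta$ is unibranched over $K$, the conjugates of $\theta$ over $K$ and over $K^h$ are the same (the minimal polynomials agree), and $S_\theta$ depends only on the values $v(\theta-\theta')$, so we may assume $(K,v)$ is Henselian. Under the tameness hypothesis we have $n=(vL:vK)\cdot[Lv:Kv]$ with $p={\rm char}(Kv)$ not dividing $(vL:vK)$ and $Lv/Kv$ separable; in particular $d_\ell=m_\ell$ divides $m_{\ell+1}$ for all $\ell$, so each $A_\ell=\{\alpha_\ell\}$ is a singleton (the maxima $\max D_{m_\ell}=\delta_\ell$ all exist, by Theorem~\ref{OSdepth} the chain is all-ordinary), and the $\delta_\ell$ are strictly increasing elements of $\Gamma$.

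The heart of the argument is the following claim: for each conjugate $\theta'\ne\theta$ of $\theta$ there is a unique index $i\in\{0,\dots,r-1\}$ with $v(\theta-\theta')=\delta_i$, and the number of conjugates landing at level $i$ is exactly $t_i=(n/m_i)-(n/m_{i+1})$. To prove this I would use the defining property (OS0)--(OS3) of the Okutsu sequence together with the key approximation estimate from \cite{OS}/\cite{OkS}: if $b\in\overline K$ has $\deg_K b\le m_i$ then $v(\theta-b)\le\delta_{i-1}<\delta_i$ (this is essentially the statement that $\delta_i=\max D_{m_i}$), whereas $v(\theta-\alpha_i)=\delta_i$. Applying this with $b$ a conjugate $\theta'$ of $\theta$: since $\deg_K\theta'=n$, one has to look at the whole chain. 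The precise mechanism is that $\alpha_i$ is an Okutsu approximation to $\theta$ of degree $m_i$, hence also (after applying a $K$-automorphism $\sigma$ of $\overline K$ carrying $\theta$ to $\theta'$) $\sigma\alpha_i$ is such an approximation to $\theta'$; comparing $v(\theta-\alpha_i)$, $v(\theta'-\sigma\alpha_i)=\delta_i$ and $v(\alpha_i-\sigma\alpha_i)$ via the ultrametric inequality, and using that distinct conjugates of $\alpha_i$ are separated at level exactly the ``relevant'' Krasner value $\omega(\alpha_i)$, pins down $v(\theta-\theta')$ to be one of the $\delta_i$. For the multiplicities, I would count: the tower $K\subseteq K(\alpha_1)\subseteq\cdots\subseteq K(\alpha_{r-1})\subseteq K(\theta)$ has successive degrees $m_{i+1}/m_i$, and a conjugate $\theta'$ satisfies $v(\theta-\theta')\ge\delta_i$ iff $\theta'$ is a conjugate of $\theta$ over $K(\alpha_i)$ (this is the Krasner-type statement: $\alpha_i$ is close enough to $\theta$ that it generates the same ``initial part''), i.e.\ iff $\sigma$ fixes $K(\alpha_i)$; there are $n/m_i$ such $\sigma$, so exactly $n/m_i$ conjugates $\theta'$ (including $\theta$ itself) have $v(\theta-\theta')\ge\delta_i$ and $n/m_{i+1}$ have $v(\theta-\theta')\ge\delta_{i+1}$, whence exactly $(n/m_i)-(n/m_{i+1})$ conjugates have $v(\theta-\theta')=\delta_i$ precisely. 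Summing, $\sum_i t_i = n/m_0 - n/m_r = n-1$, matching the total number of conjugates $\ne\theta$.

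The main obstacle I anticipate is justifying rigorously the ``Krasner-type'' step: that $v(\theta-\theta')\ge\delta_i$ holds if and only if $\sigma$ fixes $K(\alpha_i)$ (equivalently, that the filtration of $\gal$ by the subgroups $\{\sigma: v(\sigma\theta-\theta)\ge\delta_i\}$ coincides with the filtration $\gal(\overline K/K(\alpha_i))$). The forward implication is a genuine Krasner's-lemma argument and needs tameness (defectlessness) to ensure $\alpha_i\in K(\theta)$ is really forced once the approximation is good enough; the reverse implication needs property (OS0) applied to the pair $(\theta,\sigma\alpha_i)$ and the fact that $\sigma\alpha_i$ has degree $m_i<m_{i+1}$ over $K$. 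I would handle this by induction on $r$, stripping off $\alpha_{r-1}$: the extension $K(\theta)/K(\alpha_{r-1})$ is pure (depth one) and tame, so the conjugates of $\theta$ over $K(\alpha_{r-1})$ are all at distance $\delta_{r-1}$ — this is the depth-one base case, which is exactly the classical Krasner constant computation for a tame simple extension — and then the inductive hypothesis applied to $\alpha_{r-1}$ over $K$ distributes the remaining conjugates among $\delta_0,\dots,\delta_{r-2}$ with the stated multiplicities, after checking that distances from $\theta$ agree with distances from $\alpha_{r-1}$ at those lower levels (again by the ultrametric inequality, since $v(\theta-\alpha_{r-1})=\delta_{r-1}$ exceeds all the lower $\delta_j$).
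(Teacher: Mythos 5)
The paper does not actually prove this statement: it is quoted verbatim from \cite[Theorem 4.5]{OS}, so there is no internal proof to compare your argument against. Judged on its own, your plan follows the classical route (Okutsu tower plus Krasner-type filtration of the conjugates), which is essentially the strategy of the cited source, and the counting $\sum_i t_i=n/m_0-n/m_r=n-1$ and the ultrametric reductions are correct.

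The one place where the plan as written is genuinely under-justified is the equivalence ``$v(\theta-\theta')\ge\delta_i$ iff $\theta'$ is a conjugate of $\theta$ over $K(\alpha_i)$,'' together with the count $n/m_i$. The backward implication is immediate from the ultrametric inequality, but the forward implication only gives $v(\sigma\alpha_i-\alpha_i)\ge\min\bigl(v(\sigma\alpha_i-\sigma\theta),v(\sigma\theta-\theta),v(\theta-\alpha_i)\bigr)\ge\delta_i$, and to conclude $\sigma\alpha_i=\alpha_i$ you need the strict inequality $\omega(\alpha_i)<\delta_i$; likewise the count $n/m_i$ of conjugates over $K(\alpha_i)$ needs $K(\alpha_i)\subseteq K(\theta)$, which is again a Krasner consequence of $\omega(\alpha_i)<v(\theta-\alpha_i)$. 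Neither is automatic from (OS0)--(OS3): in the defect examples of this very paper one has $v(\sigma\alpha-\alpha)\ge v(\theta-\alpha)$ for $\sigma\alpha\neq\alpha$, which is exactly how the tame conclusion fails. So the induction on $r$ must be set up to carry three things simultaneously: that the truncation $[\alpha_0,\dots,\alpha_i]$ is an Okutsu sequence of $\alpha_i$, that $\alpha_i$ is itself tame over $K$ (e.g.\ because $K(\alpha_i)\subseteq K(\theta)$ is a subextension of a tame extension), and that $\omega(\alpha_i)=\max S_{\alpha_i}=\delta_{i-1}<\delta_i$ as output of the inductive step. You gesture at all of this in your ``main obstacle'' paragraph, and the depth-one base case (all conjugates of a pure tame element at the same distance, via the different/Newton-polygon computation of $v(g'(\theta))=(n-1)\delta_0$) is classical, so the plan is viable; but the circularity between ``$\alpha_i$ is tame'' and ``$K(\alpha_i)\subseteq K(\theta)$'' needs to be broken explicitly in the induction, and the Henselization step should also record that the Okutsu data $(m_i,\delta_i)$ of a unibranched defectless element are unchanged over $K^h$ (degrees of approximants can drop over $K^h$ in general).
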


\begin{Cor}
If $\theta$ is tame over $K$, then the answer to Questions \ref{mainconj}, \ref{kuhlmaconj} and \ref{connart} is affirmative. 
\end{Cor}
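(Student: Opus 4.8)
The plan is to derive the corollary directly from the preceding Theorem together with Theorem \ref{OSdepth} and the definitions of the invariants involved. First I would invoke the Theorem just stated: since $\theta$ is tame, $S_\theta$ coincides (as a multiset) with $\{\delta_0^{t_0},\ldots,\delta_{r-1}^{t_{r-1}}\}$, where the $\delta_\ell=v(\theta-\alpha_\ell)$ are strictly increasing, so the \emph{underlying set} of $S_\theta$ has exactly $r$ elements. By Theorem \ref{OSdepth}, the length $r$ of the Okutsu sequence equals $\dep(\theta)$, hence $\#S_\theta=r=\dep(\theta)$, which is the affirmative answer to Question \ref{mainconj} for this $\theta$. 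For Question \ref{kuhlmaconj}, one observes that ``$\theta$ pure'' means $\dep(\theta)=1$, so $r=1$; then $S_\theta=\{\delta_0^{t_0}\}$ has a single element and $\omega(\theta)=\max S_\theta=\delta_0=v(\theta-\theta')$ for every conjugate $\theta'\neq\theta$, which is exactly condition \eqref{Kuhlque}.

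For Question \ref{connart}, the task is to show $\delta(\theta)=\omega(\theta)^-$ under the tameness hypotheses (which in particular imply $Lv/Kv$ separable and $(vL:vK)$ prime to $\chr(Kv)$, so the hypotheses of Question \ref{connart} hold). Here I would unwind the definition of the main invariant $\delta(\theta)$ as the cut whose left cut set is the smallest initial segment of $vL$ containing $D_{n-1}(\theta,K)=\{v(\theta-b)\mid \deg_K b<n\}$; since for a tame $\theta$ the degrees in the Okutsu sequence satisfy $m_{r-1}\mid m_r=n$ with $m_{r-1}<n$, and the maximal distance to elements of degree $<n$ is realized (by (OS1), since $D_{m_{r-1}}$ has a maximal element in the defectless tame case) and equals $\delta_{r-1}=v(\theta-\alpha_{r-1})$, one gets $\delta(\theta)=\delta_{r-1}^+$. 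On the other hand $\omega(\theta)=\max S_\theta=\delta_{r-1}$, so $\omega(\theta)^-=\delta_{r-1}^-$. The subtlety is the discrepancy between $\delta_{r-1}^+$ and $\delta_{r-1}^-$: I would resolve it by recalling that $\delta(\theta)$ is defined via an initial segment that is required to \emph{strictly} contain the relevant distances only up to the cut, i.e. the convention in this paper (cf. the definition of $d_m(\theta)=D_m^+$ and the phrasing ``smallest initial segment containing'') makes $\delta(\theta)$ the cut just above the supremum of those distances; matching this against $\omega(\theta)^-$ requires checking that the sup is attained exactly at $\omega(\theta)$ and that no larger distance to a lower-degree element exists, which is precisely (OS0) and (OS3).

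The main obstacle I expect is this last bookkeeping with cuts: the cited Theorem gives $S_\theta$ as an explicit multiset of \emph{values}, but Question \ref{connart} is phrased in terms of \emph{cuts}, and one must be careful that $\delta(\theta)$, defined through a ``smallest initial segment,'' lands on the correct side ($\omega(\theta)^-$ rather than $\omega(\theta)^+$). The reconciliation should follow from the fact that in the tame case $\theta$ is \emph{not} a limit of lower-degree elements in the relevant sense — i.e. $D_{n-1}(\theta,K)$ does attain its maximum at $\omega(\theta)$ — combined with the paper's convention (used already for $\delta(\beta)<0^-$ in the introduction) that $\delta(\theta)$ records the cut immediately above that attained maximum; but making the sidedness precise is the one place where I would need to look carefully at \cite{OkS}, Theorem 3.4, rather than quote the multiset result alone. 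Everything else is a direct substitution of $r=\dep(\theta)$ and $\omega(\theta)=\max S_\theta$ into the statements of Questions \ref{mainconj}, \ref{kuhlmaconj}, \ref{connart}.
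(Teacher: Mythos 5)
The paper offers no written proof of this corollary; it is asserted as an immediate consequence of the displayed theorem from \cite{OS}, and your derivation is exactly the intended one. For Question \ref{mainconj} your argument is complete: the multiplicities $t_i=(n/m_i)-(n/m_{i+1})$ are positive integers because $m_i\mid m_{i+1}$ and $m_i<m_{i+1}$, so the underlying set of the multiset has exactly $r$ elements, and Theorem \ref{OSdepth} gives $r=\dep(\theta)$. Question \ref{kuhlmaconj} then follows as you say, since purity forces $r=1$. For Question \ref{connart}, the sidedness worry you isolate is genuine, but it is a defect of the paper's conventions rather than of your reasoning: taking the definition of $\delta(\theta)$ literally as the cut whose left set is the smallest initial segment containing $\bigcup_{m<n}D_m$, and using that in the tame case this union attains its maximum $\delta_{r-1}=\omega(\theta)$ (by (OS0) and (OS1)), one lands on $\omega(\theta)^+$ rather than $\omega(\theta)^-$; these differ as cuts of $\Gamma$ whenever $\omega(\theta)\in\Gamma$, which it always is. The substantive content of the affirmative answer — that the maximal distance to lower-degree elements is attained and equals Krasner's constant — is exactly what your (OS0)/(OS3) argument establishes, and reconciling the $+/-$ decoration requires adopting the convention of \cite{OkS} (or reading $\delta(\theta)$ as a cut of $vL$ in situations where $\omega(\theta)\notin vL$). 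Your decision to flag this as the one point needing the source is the right call; nothing else is missing.
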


\section{The defect case}
The situation in the defect case is very different. The following defect cases are known examples when the answer to Question \ref{mainconj} is affirmative:
\begin{enumerate}
\item If $K(\theta)/K$ is Galois of degree $p$, then
\[
\# S_\theta=\dep(\theta)=1.
\]
\item Examples $D1$ and $D2$ of \cite{OS} are situations where $L=K(\theta)$ and
\[
p=d(L/K,v), (L/K,v)\mbox{ is not immediate  and }\#S_\theta=2=\dep(\theta).
\] 
\item Example E of \cite{OS} is a situation where $L=K(\theta)$ and
\[
p^2=[L:K]=d(L/K,v)\mbox{ and }\#S_\theta=2=\dep(\theta).
\]
\item The following case follows from \cite[Theorem 2.4]{NNASD}. If $(K,v)$ is Henselian, $K$ contains an infinite algebraic extension of $\F_p$ and $L=K(\alpha_1,\ldots,\alpha_n)$ where $\alpha_1,\ldots,\alpha_n$ are linearly disjoint defect AS elements over $K$ for which $d_1(\alpha_i)=0^-$ for every $i$, then there exists a generator $\theta$ of $L/K$ such that
\[
\#S_\theta=\dep(\theta)=1.
\]
\end{enumerate}
 
In the next sections we present examples of defect extensions for which the answer to Questions \ref{mainconj}, \ref{kuhlmaconj} and \ref{connart} is negative.

\subsection{Counterexamples for Questions \ref{mainconj} and \ref{connart}}\label{mainandnart}
Let $(K,v)$ be a valued field admitting a dependent defect AS extension $(L/K,v)$ with AS generator $\beta$. Assume moreover that $v$ is a rank one valuation. This means that
\[
d_1(\beta)=\delta^{-}\mbox{ for }\delta\in \R\mbox{ and }\delta<0.
\]

\begin{Obs}
By Corollary \ref{corkaplansky} we have
\[
\delta(\beta)=\delta^-<0^-=\omega(\beta)^-.
\]
In particular, the answer for Question \ref{connart} for $\beta$ is negative.
\end{Obs}
The next lemma provides an easy answer to Question \ref{mainconj}.
\begin{Lema}\label{congent1}
Take defect AS elements $\alpha,\beta\in\overline{K}$ such that $K(\alpha)$ and $K(\beta)$ are linearly disjoint over $K$. Suppose that there exist $a,b\in K$ such that
\begin{equation}\label{congent}
v(ac+bd)=0\mbox{ for every }(c,d)\in\F_p^2\setminus \{(0,0)\}.
\end{equation}
If $d_1(b\beta)<d_1(a\alpha)$, then $\theta=a\alpha+b\beta$ is a generator of $K(\alpha,\beta)$ such that
\[
\#S_\theta=1<2=\dep(\theta).
\]
\end{Lema}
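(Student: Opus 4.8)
The plan is to analyze the conjugates of $\theta=a\alpha+b\beta$ directly. Since $K(\alpha)$ and $K(\beta)$ are linearly disjoint AS extensions, $K(\alpha,\beta)/K$ has degree $p^2$ and its Galois closure has Galois group $(\Z/p)^2$; the conjugates of $\theta$ are exactly the elements $a\alpha_c+b\beta_d$ where $\alpha_c=\alpha+c$ (the AS-conjugates of $\alpha$, indexed by $c\in\F_p$) and $\beta_d=\beta+d$ (indexed by $d\in\F_p$), the element corresponding to $(c,d)=(0,0)$ being $\theta$ itself. Hence for a conjugate $\theta'\neq\theta$ we compute
\[
\theta-\theta' = a(\alpha-\alpha_c)+b(\beta-\beta_d) = -(ac+bd)
\]
where $(c,d)\in\F_p^2\setminus\{(0,0)\}$. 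By hypothesis \eqref{congent}, $v(ac+bd)=0$ for all such pairs, so $v(\theta-\theta')=0$ for every conjugate $\theta'\neq\theta$; this gives $\#S_\theta=1$ (and incidentally $\omega(\theta)=0$). One subtlety to nail down: I must check that $\theta$ is genuinely a generator of $K(\alpha,\beta)$, i.e. $\deg_K\theta=p^2$. This follows because the $p^2$ values $a\alpha_c+b\beta_d$ are pairwise distinct — if $a\alpha_c+b\beta_d=a\alpha_{c'}+b\beta_{d'}$ then $a(c-c')+b(d-d')=0$ in $K$, and since $v(a(c-c')+b(d-d'))=0$ by \eqref{congent} unless $(c,d)=(c',d')$, we get the distinctness — so $\theta$ has $p^2$ distinct conjugates.

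The second half, $\dep(\theta)=2$, is where the real work lies, and I expect this to be the main obstacle. The strategy is to use Theorem \ref{OSdepth}: I want to exhibit an Okutsu sequence for $\theta$ of length exactly $2$, or equivalently show $\dep(\theta)\geq 2$ (it is clearly $\leq 2$ since $\deg_K\theta=p^2$ and the degrees in an Okutsu sequence strictly increase through divisors, forcing $r\leq 2$; actually $r\le 2$ already because $1<m_1<p^2$ with the depth bounded by the number of ``jumps''). To show $\dep(\theta)\ge 2$ it suffices to show $\dep(\theta)\neq 1$, i.e. that $\theta$ is not pure. I would argue by contradiction: if $\dep(\theta)=1$, the Okutsu sequence is just $[\{\text{const}\},\{\theta\}]$ and, by Corollary \ref{corkaplansky} together with the structure theory, $v_\theta$ would be obtained from $v$ by a single augmentation, which constrains $D_1(\theta,K)$ and the minimal polynomial of $\theta$ severely. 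The key computation is to pin down $d_1(\theta)=D_1(\theta,K)^+$: I claim the best approximation to $\theta$ by elements of $K$ is governed by $a\alpha$, so that $d_1(\theta)$ should relate to $a\cdot d_1(\alpha)$, while the ``second layer'' is detected by $b\beta$. Concretely, since $d_1(b\beta)<d_1(a\alpha)$, the element $a\alpha$ (or rather a degree-$p$ element close to it) serves as the middle term $A_1$ of an Okutsu sequence, and one shows $v(\theta - a\alpha) = v(b\beta)$ lies strictly above $D_1(\theta,K)$ but that no element of $K$ does as well — this is the content of (OS0) and (OS3) at level $\ell=1$.

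In more detail, the plan for $\dep(\theta)\ge 2$ is: first, compute $D_1(\theta,K)$ and show it has no maximal element and that its associated cut $d_1(\theta)$ is a ``defect-type'' cut; by Corollary \ref{corkaplansky}, $d_1$ (the minimal degree of a distance) must then be a power of $p$, and I would show it equals $p$ with a degree-$p$ approximant of the form coming from $a\alpha$. Then I check that $\{v(\theta-a')\mid \deg_K a'=p\}$ genuinely exceeds $D_1(\theta,K)$ — witnessed by $a' = a\alpha$ giving $v(\theta-a\alpha)=v(b\beta)=v(b)+v(\beta)$, which by the dependence/linear-disjointness hypotheses sits above everything reachable from $K$. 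Since the degree $p$ appears as a genuine new layer, the Okutsu sequence has length $\ge 2$, hence $\dep(\theta)\ge 2$, and combined with $\dep(\theta)\le 2$ we conclude $\dep(\theta)=2$. The delicate points will be: (i) verifying carefully that $D_1(\theta,K)$ is exactly controlled by $a\,d_1(\alpha)$ shifted by $v(b\beta)$-type contributions and has no max (using that the AS extension $K(\alpha)/K$ contributes a defect cut and that $K(\beta)$ is ``smaller''), and (ii) ruling out that some clever degree-$p$ element unrelated to $\alpha$ does better — here linear disjointness and a valuation-theoretic independence argument (à la the base-change behavior of defect) should close the gap.
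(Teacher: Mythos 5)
Your computation of the conjugates and the conclusion $\#S_\theta=1$ (together with the fact that $\theta$ generates $K(\alpha,\beta)$) is correct and is exactly the paper's argument, which quotes \cite{NNASD} for the description of the conjugates as $\theta+a\F_p+b\F_p$.

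The depth part, however, is only a sketch, and the one concrete choice you make in it is wrong: you have the roles of $a\alpha$ and $b\beta$ reversed. Since $d_1(b\beta)<d_1(a\alpha)$, it is $b\beta$ that is approximated \emph{worse} by elements of $K$, so the bottleneck for approximating $\theta$ is the $b\beta$ part and one gets $d_1(\theta)=d_1(b\beta)$ (not something "governed by $a\alpha$"). Consequently your proposed middle term $a\alpha$ does not work: $v(\theta-a\alpha)=v(b\beta)$ is just the element of $D_1(b\beta,K)$ corresponding to $c=0$, and since $(K(\beta)/K,v)$ is a defect AS extension the set $D_1(b\beta,K)$ has no maximum, so there are $c\in K$ with $v(b\beta-c)>v(b\beta)$ and hence (absorbing a good $K$-approximation $l$ of $a\alpha$) elements of $D_1(\theta,K)$ strictly exceeding $v(\theta-a\alpha)$. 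So the claim "$v(\theta-a\alpha)$ lies strictly above $D_1(\theta,K)$" is false and (OS0)/(OS3) fail for $A_1=\{a\alpha\}$. The correct witness goes the other way: pick $l\in K$ with $v(a\alpha-l)>d_1(b\beta)$ (possible precisely because $d_1(a\alpha)>d_1(b\beta)$) and set $\eta=b\beta+l$, a degree-$p$ element with $v(\theta-\eta)=v(a\alpha-l)>d_1(b\beta)=d_1(\theta)$; Theorem \ref{OSdepth} then gives $\dep(\theta)>1$. Finally, your justification of $\dep(\theta)\le 2$ ("degrees strictly increase through divisors") is not supported by Definition \ref{defOkS}, which only requires $1=m_0<m_1<\cdots<m_r=n$ with no divisibility; the paper closes this by citing an external result (\cite{NS2023}), and some such input is needed here.
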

\begin{proof}
Since $K(\alpha)$ and $K(\beta)$ are linearly disjoint over $K$, by \cite[Lemma 2.3]{NNASD}, all the conjugates of $\theta$ over $K$ are
\[
\theta+a\F_p+b\F_p.
\]
Hence, condition \eqref{congent} guarantees that $\theta$ is a generator of $K(\alpha,\beta)$ and $\#S_\theta=1$.

Since $d_1(b\beta)<d_1(a\alpha)$ we can take an element $l\in K$ such that
\[
d_1(b\beta)<v(a\alpha-l)
\]
and consider $\eta=b\beta+l$. Then
\[
v(\theta-\eta)>d_1(b\beta)=d_1(\theta)
\]
and $p=\deg(\eta)<\deg(\theta)$. By Proposition \ref{OSdepth} we have $\dep(\theta)>1$ and consequently (by \cite[Theorem 1.1]{NS2023}, for instance) $\dep(\theta)=2$.
\end{proof}

\subsubsection{Concrete example}\label{exampleeasy} According to Lemma \ref{congent1}, it is enough to build a field $K$ with a rank one valuation $v$ and defect AS elements $\alpha,\beta\in\overline K$, linearly disjoint over $K$, such that $\alpha$ is independent ($d_1(\alpha)=0^-$) and $\beta$ is dependent ($d_1(\beta)=\delta^-$, $\delta<0$).

We will build this example based on \cite[Example 4.19]{Kuhl}. 
Set $\Gamma=(\Z+\pi \Z)\otimes_{\Z}\Q$ and $k=\overline{\F_p}$. Our example will be constructed for a field $K$ such that
\[
K_0=k(t,t^{\pi})\subset K\subset \mathbb H=k\left(\left(t^\Gamma\right)\right)
\]
equipped with the $t$-adic valuation $v$. The Artin-Schreier operator $AS:\overline K\lra \overline K$ is defined by $AS(a)=a^p-a$.

Let
\[
a_1=t^{-\frac{1}{p}}+\ldots+t^{-\frac{1}{p^n}}+\ldots\in \mathbb H,
\]
i.e., $AS(a_1)=t^{-1}$. Iteratively, we construct
\[
a_{\ell+1}=-a_\ell^{\frac{1}{p}}-\ldots-a_\ell^{\frac{1}{p^n}}-\ldots\in \mathbb H,
\]
i.e., $AS(a_{\ell+1})=-a_\ell$. Also, set $b_\ell=t^{\frac{\pi}{p^\ell}}$. Define
\[
K_\ell=K_0(a_\ell,b_\ell)\mbox{ and }K=\bigcup_{\ell\in\N}K_\ell.
\]
One can easily see that
\begin{equation}\label{equakl}
vK_\ell=\frac{1}{p^\ell}\left(\Z+\pi\Z\right)\mbox{ for every }\ell\in\N.
\end{equation}
Let $ \beta,\alpha\in \overline K$ such that
\[
AS(\beta)=t^{-p-1}\mbox{ and }AS(\alpha)=t^{-\pi}.
\]
If we set
\[
c_{\ell}=a_1+\ldots+a_{\ell}\mbox{ and }d_\ell=b_1+\ldots+b_\ell,
\]
then
\[
t^{-1}=a_1^p-a_1=a_1^p+a_2^p-a_2=\ldots=a_1^p+\ldots+a_\ell^p-a_\ell=c_\ell^p-a_{\ell}.
\]
Consequently,
\[
v(t^{-\frac{1}{p}}-c_{\ell})=v\left(a_\ell^{\frac{1}{p}}\right)=-\frac{1}{p^{\ell+1}}.
\]
Then
\begin{equation}\label{psicusbeta}
v(\beta-t^{-1}c_\ell)=-1-\frac{1}{p^{\ell+1}}.
\end{equation}
Also, it is easy to see that
\begin{equation}\label{psicusalpha}
v(\alpha-d_\ell)=-\frac{\pi}{p^{\ell+1}}.
\end{equation}

By \eqref{psicusbeta} and  \eqref{psicusalpha} we deduce that $d_1(\beta)\geq -1^-$ and $d_1(\alpha)\geq 0^-$. On the other hand, if there were $b\in K$ such that $v(\beta-b)\geq -1$ or $v(\alpha-b)\geq 0$, then we would have
\[
v(b-t^{-1}c_\ell)=v(\beta-t^{-1}c_\ell-(\beta-b))=-1-\frac{1}{p^{\ell+1}} \mbox{ for every }\ell\in\N
\]
or
\[
v(b-d_\ell)=v(\alpha-d_\ell-(\alpha-b))=-\frac{\pi}{p^{\ell+1}} \mbox{ for every }\ell\in\N.
\]
On the other hand, there exists $\ell\in\N$ such that $b\in K_\ell$. Since $c_\ell,d_\ell,t^{-1}\in K_\ell$ this implies that $b-t^{-1}c_\ell\in K_\ell$ or $b-d_\ell\in K_\ell$. This is a contradiction to \eqref{equakl}. Therefore, $d_1(\beta)=-1^{-}$ and $d_1(\alpha)=0^-$.

By \cite[Lemma 2.3]{NNASD}, $\alpha,\beta\in \overline{K}$ are linearly disjoint over $K$. Take $b\in k\setminus\F_p$ and set $\theta=\alpha+b\beta$. By Lemma \ref{congent1} we obtain that
\[
\#S_\theta=1<2=\dep(\theta).
\]

\subsection{Counterexample for Question \ref{kuhlmaconj}}\label{monster}
Take $\alpha,\beta\in \overline{K}$ such that $K(\alpha)$ and $K(\beta)$ are linearly disjoint over $K$ and take $b\in K\setminus\F_p$. As before, $\theta=\alpha+b\beta$ is a generator of $K(\alpha,\beta)$.

If $d_1(\alpha)\neq d_1(\beta)+vb$, then Lemma \ref{congent1} shows that $\dep(\theta)=2$. We are interested in studying the case where $d_1(\alpha)= d_1(\beta)+vb$.
\begin{Obs}
Observe that in this case, if $d_1(\alpha)\neq d_1(\beta)$ (and hence $vb\neq 0$), then we have $\#S_\theta =2$ (because all the conjugates of $\theta$ are $\theta+\F_p+b\F_p$). In particular, if we find such situation with $\dep(\theta)=1$ we would have a counterexample to Question \ref{kuhlmaconj}. 
\end{Obs}

In what follows, we will build an example of a field $K$ and AS elements $\alpha,\beta\in\overline K$, linearly disjoint over $K$, such that $K(\alpha,\beta)=K(\theta)$ and
\[
\#S_\theta=2>1=\dep(\theta).
\]

Our example will be constructed for a field $K$ such that
\[
K_0=k(t,t^{\pi})\subset K\subset \mathbb H=k((t^\R))
\]
for $k=\overline {\F_p}$ equipped with the $t$-adic valuation $v$. For $b\in \mathbb H$ write
\[
b=\sum_{\gamma\in\Gamma}c_\gamma t^\gamma\in \mathbb H.
\]
For $\delta\in \R$ we will denote the \textbf{truncation of $b$ at $\delta$} by ${\rm trn}_\delta(b)$:
\[
{\rm trn}_\delta(b)=\sum_{\gamma<\delta} c_\gamma t^\gamma.
\]

Take a strictly increasing sequence $\{r_i\}_{i\in \N}$ of real numbers, with $p=r_1$, such that
\begin{equation}\label{ratindoenset}
\{\pi\}\cup\{r_i\}_{i\in \N}\mbox{ is a rationally independent set.}
\end{equation}
For each $\ell\in \N$ set
\[
a_\ell=t^{-\frac{\pi}{p^\ell}}, b_\ell=t^{-\frac{1}{r_{\ell}}}-t^{-\frac{1}{r_{\ell+1}}}\in\mathbb H.
\]
Let
\[
K_\ell=K_0(a_1,\ldots,a_\ell,b_1,\ldots,b_\ell)\mbox{ and }K=K_0(a_\ell,b_\ell\mid \ell\in\N)=\bigcup_{\ell\in\N} K_\ell.
\]

\begin{Lema}
For every $\ell\in\N$ we have
\begin{equation}\label{equatsibestigl}
vK_\ell=G_\ell:=\frac{\pi}{p^\ell}\Z+\frac{1}{r_1}\Z+\ldots+\frac{1}{r_\ell}\Z+\frac{p}{r_{\ell+1}}\Z.
\end{equation}
In particular,
\begin{equation}\label{equianfgmarl}
\frac{\pi}{p^{\ell+1}},\frac1{r_{\ell+1}}\notin vK_\ell.
\end{equation}
\end{Lema}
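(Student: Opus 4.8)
The plan is to compute the value group $vK_\ell$ directly from the generators, by induction on $\ell$, and then read off \eqref{equianfgmarl} as a consequence of the rational independence hypothesis \eqref{ratindoenset}.

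First I would establish the inclusion $G_\ell\subseteq vK_\ell$. This is the easy direction: since $v(a_j)=-\pi/p^j$ for $j\le\ell$, the element $a_\ell$ contributes $\frac{\pi}{p^\ell}\Z$ (and this subgroup already contains $\frac{\pi}{p^j}\Z$ for $j\le\ell$). For the $b_j$, note that $v(b_j)=v\bigl(t^{-1/r_j}-t^{-1/r_{j+1}}\bigr)=-1/r_j$ because $1/r_j<1/r_{j+1}$... wait, one must be careful: $r_j<r_{j+1}$ gives $1/r_j>1/r_{j+1}$, so $v(t^{-1/r_j})=-1/r_j<-1/r_{j+1}=v(t^{-1/r_{j+1}})$, hence $v(b_j)=-1/r_j$. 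So $b_1,\dots,b_{\ell-1}$ give $\frac1{r_1}\Z,\dots,\frac1{r_{\ell-1}}\Z$, while $b_\ell$ gives $\frac1{r_\ell}\Z$; but we also want $\frac{p}{r_{\ell+1}}\Z$ rather than $\frac1{r_{\ell+1}}\Z$ — this comes from the relation $b_\ell^p$, or more precisely from the AS-type construction: one has $AS(\beta)$-like identities making $t^{-p/r_{\ell+1}}$ (a $p$-th power of $t^{-1/r_{\ell+1}}$) expressible over $K_\ell$. Concretely $t^{-1/r_j}=b_j+t^{-1/r_{j+1}}$, so telescoping $t^{-1/r_1}=b_1+\dots+b_\ell+t^{-1/r_{\ell+1}}$, and $r_1=p$ makes $t^{-1/r_1}=t^{-1/p}$ whose $p$-th power $t^{-1}\in K_0$; combining these gives the $\frac{p}{r_{\ell+1}}\Z$ term. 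I would spell out this telescoping carefully since it is the crux of why the last summand has that specific form.

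Next, the reverse inclusion $vK_\ell\subseteq G_\ell$. Here I would argue that $K_\ell=K_0(a_1,\dots,a_\ell,b_1,\dots,b_\ell)$ is generated over $K_0$ by elements whose values lie in $G_\ell$, and that $G_\ell$ is already the value group: since $vK_0=\Z\pi+\Z$ and $G_\ell\supseteq vK_0$, it suffices to check that adjoining each generator does not enlarge the value group beyond $G_\ell$. The standard tool is that if $[K_{\ell-1}(g):K_{\ell-1}]$ equals the product of the ramification index and residue degree times the defect, and one can bound the ramification index, then $vK_\ell/vK_{\ell-1}$ is controlled. In fact the cleanest route: show $[K_\ell:K_0]$ is a power of $p$ (each $a_j$ and $b_j$ has degree dividing a power of $p$ over the previous field — $a_j^{p^j}=t^{-\pi}\cdot(\text{unit})$ type relations, $b_j$ similar), so no prime other than $p$ divides the ramification index, while $G_\ell/vK_0$ is a $p$-group of the expected order; then a cardinality/index count forces $vK_\ell=G_\ell$. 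I expect this fundamental-inequality bookkeeping — matching $(vK_\ell:vK_0)$ against $[K_\ell:K_0]$ and knowing the residue field stays $k=\overline{\F_p}$ so residue degree is $1$ — to be the main obstacle, because it requires knowing the extensions are immediate-or-ramified in the right way and that no unexpected defect inflates the degree.

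Finally, \eqref{equianfgmarl} follows formally from \eqref{equatsibestigl} and \eqref{ratindoenset}: an element of $G_\ell$ has the form $\frac{m\pi}{p^\ell}+\sum_{j=1}^\ell\frac{n_j}{r_j}+\frac{n_{\ell+1}p}{r_{\ell+1}}$ with integer coefficients; if this equalled $\frac{\pi}{p^{\ell+1}}$ then, writing everything over a common denominator and using that $\pi,r_1,\dots,r_{\ell+1}$ are rationally independent, comparing the $\pi$-coefficients gives $\frac{m}{p^\ell}=\frac1{p^{\ell+1}}$, i.e. $mp=1$, impossible in $\Z$. Similarly $\frac1{r_{\ell+1}}\in G_\ell$ would force, after comparing $r_{\ell+1}$-coefficients, $\frac{n_{\ell+1}p}{1}=\frac11$ up to the denominator, i.e. $n_{\ell+1}p=1$, again impossible. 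So neither $\frac{\pi}{p^{\ell+1}}$ nor $\frac1{r_{\ell+1}}$ lies in $vK_\ell$, completing the proof.
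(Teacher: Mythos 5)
Your easy inclusion $G_\ell\subseteq vK_\ell$ and your derivation of \eqref{equianfgmarl} from \eqref{equatsibestigl} are fine and match the paper (in particular you correctly identify the telescoping $d_\ell=b_1+\cdots+b_\ell=t^{-1/p}-t^{-1/r_{\ell+1}}$ and the char-$p$ identity $t^{-1}-d_\ell^p=t^{-p/r_{\ell+1}}$ as the source of the summand $\frac{p}{r_{\ell+1}}\Z$). The gap is in the hard direction. Your plan rests on treating $K_\ell/K_0$ as a finite algebraic extension and matching $[K_\ell:K_0]$ against $(vK_\ell:vK_0)$ via the fundamental inequality. But in this example $a_\ell=t^{-\pi/p^\ell}$ is the only algebraic generator; the elements $b_j=t^{-1/r_j}-t^{-1/r_{j+1}}$ are \emph{transcendental} over $K_0=k(t,t^\pi)$, because $v(b_j)=-1/r_j$ (and $v(t^{-1}-b_1^p)=-p/r_2$, etc.) lies outside the divisible hull $\Q+\pi\Q$ of $vK_0$, whereas any element algebraic over $K_0$ inside $\mathbb H$ has value in that divisible hull. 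Consequently $[K_\ell:K_0]=\infty$ and $(G_\ell:vK_0)=\infty$ (each $\frac{1}{r_j}\Z$ with $j\ge 2$ meets $vK_0$ trivially), so there is no index bookkeeping to be done and no residue-degree/defect analysis that could close the argument. The claim ``each $b_j$ has degree dividing a power of $p$ over the previous field'' is simply false.

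What is actually needed, and what the paper does, is a direct monomial computation: every element of $K_\ell$ is a quotient of elements of $R=k[t^{-1},t^{-\pi},a_1,\dots,a_\ell,b_1,\dots,b_\ell]$, and after the substitutions $t^{-1}=\tilde t+d_\ell^{\,p}$ (with $\tilde t=t^{-p/r_{\ell+1}}$) and $b_1=d_\ell-b_2-\cdots-b_\ell$, every $f\in R$ becomes a sum of monomials in $\tilde t,\,t^{-\pi/p^\ell},\,d_\ell,\,b_2,\dots,b_\ell$, whose values $-\frac{p}{r_{\ell+1}},-\frac{\pi}{p^\ell},-\frac1p,-\frac1{r_2},\dots,-\frac1{r_\ell}$ are rationally independent; hence distinct monomials have distinct values, $v(f)$ equals the minimum of the monomial values, and this minimum lies in $G_\ell$. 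The change of generators is not cosmetic: without replacing $t^{-1}$ by $\tilde t+d_\ell^{\,p}$ one has the cancellation $d_\ell^{\,p}-t^{-1}=-\tilde t$, whose value $-p/r_{\ell+1}$ is strictly larger than the minimum of the two monomial values, so ``$v=\min$ over monomials'' would fail and the summand $\frac{p}{r_{\ell+1}}\Z$ could not be controlled. Your proposal contains no mechanism to rule out such cancellations, which is exactly the content of the reverse inclusion.
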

\begin{proof}
For every $\ell\in\N$ write
\[
d_\ell=b_1+\ldots+b_\ell=t^{-\frac{1}{p}}-t^{-\frac{1}{r_{\ell+1}}}\mbox{ and }\tilde{t}=t^{-1}-d_\ell^{p}=t^{-\frac{p}{r_{\ell+1}}}.
\]
Then we have
\[
v(a_\ell)=-\frac{\pi}{p^\ell}, v(b_j)=-\frac{1}{r_j}\ \forall j, 1\leq j\leq \ell,\mbox{ and }v\left(\tilde t\right)=-\frac{p}{r_{\ell+1}}.
\]
Hence, $G_\ell\subseteq vK_\ell$.

For the opposite inclusion, take any element $b\in K_\ell$. Then $b=f/g$ for
\[
f,g\in R:=k[t^{-1},t^{-\pi},a_1,\ldots,a_\ell, b_1,b_2,\ldots,b_\ell].
\]
We will show that for every $f\in R$ we have $v(f)\in G_\ell$ and \eqref{equatsibestigl} will follow.
For a given monomial
\[
M=ct^{-s}t^{-m\pi}a_1^{r_1}\ldots a_\ell^{r_\ell}b_1^{s_1}b_2^{s_2}\ldots b_\ell^{s_\ell}\in R
\]
we replace $t^{-1}=\tilde{t}+d_\ell^p$ and $b_1=d_\ell-b_2-\ldots-b_\ell$. Then every element $f\in R$ can be written as
\begin{equation}\label{equandkformo}
f=\sum_{i=1}^n M_i
\end{equation}
where each $M_i$ is of the form
\begin{equation}\label{eqagpfroj}
M=c\tilde t^{s} t^{-\frac{u}{p^\ell}\pi}d_\ell^{s_1}b_2^{s_2}\ldots b_\ell^{s_\ell}\mbox{ for some }
c\in k, s,u,s_1,\ldots,s_\ell\in\N_0.
\end{equation}
Since the value of such $M$ is
\[
-s\frac{p}{r_{\ell+1}}-u\frac{\pi}{p^\ell}-\frac{s_1}{p}-\frac{s_2}{r_2}-\ldots-\frac{s_\ell}{r_\ell}\in G_\ell
\]
and $\{\pi,p,\ldots,r_\ell,r_{\ell+1}\}$ is a rationally independent set, we deduce that all the values of the distinct monomials in \eqref{equandkformo} are distinct. In particular,
\[
v(f)=\min_{1\leq i\leq n}\left\{v\left(M_i\right)\right\}\in G_\ell.
\]
This concludes the proof of \eqref{equatsibestigl}.

It is easy to see that \eqref{equianfgmarl} follows from \eqref{ratindoenset} and  \eqref{equatsibestigl}.
 \end{proof}

\begin{Def}
We say that an element $a\in \mathbb H$ has \textbf{no finite limits} if for every $\delta\in \R$ we have
\[
\#{\rm supp}({\rm trn}_\delta(a))<\infty.
\]
A subfield $L$ of $\mathbb H$ has no finite limits if every element of $a\in L$ has no finite limits.
\end{Def}
\begin{Obs}
The field $K$ above has no finite limits.
\end{Obs}

Let $\alpha,\beta\in \mathbb H$ be given by
\[
\alpha=t^{-\frac{\pi}{p}}+\ldots+t^{-\frac{\pi}{p^\ell}}+\ldots
\mbox{ and }\beta=t^{-\frac{p+1}{p}}+\ldots+t^{-\frac{p+1}{p^\ell}}+\ldots,
\]
i.e.,
\[
AS(\alpha)=t^{-\pi}\mbox{ and }AS(\beta)=t^{-p-1}.
\]

\begin{Lema}
We have
\begin{equation}\label{impodist}
d_1(\alpha)=d_1\left(t^{-\frac{1}{p}}\right)=0^-\mbox{ and }d_1(\beta)=-1^-.
\end{equation}
\end{Lema}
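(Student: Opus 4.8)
The plan is to compute the cut $d_1(\alpha)$ by exhibiting an explicit well-ordered cofinal family of approximants to $\alpha$ from elements of $K$ of degree $1$, and then show that no single element of $K$ achieves value $0$ in $\alpha-K$, so that $D_1(\alpha,K)$ has no maximum and $d_1(\alpha)=0^-$; the same scheme handles $\beta$, where the target value is $-1$ rather than $0$. First I would set, for each $\ell$, the partial sums
\[
c_\ell=t^{-\frac{\pi}{p}}+\cdots+t^{-\frac{\pi}{p^\ell}}\in K_\ell,\qquad
e_\ell=t^{-\frac{p+1}{p}}+\cdots+t^{-\frac{p+1}{p^\ell}}\in K_\ell,
\]
which lie in $K$ because each $t^{-\pi/p^j}=a_j\in K_j$ and $t^{-(p+1)/p^j}=t^{-1}\cdot a_j^{1/\pi}$-type monomials are built from $t^{-1},a_j\in K_\ell$ (more directly, $t^{-(p+1)/p^j}$ is a product of $t^{-1/p^{j-?}}$-powers expressible in terms of $d_\ell$ and the $b_i$, using $r_1=p$). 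Then $v(\alpha-c_\ell)=v(t^{-\pi/p^{\ell+1}})=-\pi/p^{\ell+1}$ and $v(\beta-e_\ell)=v(t^{-(p+1)/p^{\ell+1}})=-(p+1)/p^{\ell+1}$; these values form a strictly increasing sequence converging to $0$ and to $-1$ respectively, so $d_1(\alpha)\ge 0^-$ and $d_1(\beta)\ge -1^-$. For the comparison $d_1(\alpha)=d_1(t^{-1/p})$ I would note $\alpha-t^{-1/p}$ has leading term $t^{-\pi/p}$ of negative value, but more to the point the approximants $t^{-1/p}-(b_2^{1/?}\cdots)$ — actually it is cleaner to observe directly that $\{v(\alpha-b)\mid b\in K,\ \deg_K b=1\}$ and $\{v(t^{-1/p}-b)\mid b\in K\}$ are cofinal in each other by subtracting/adding a fixed element of value $-\pi/p$, forcing the two cuts to coincide.

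The key step, and the main obstacle, is the lower bound: showing that $0\notin vK_\ell-\text{stuff}$, i.e. that there is no $b\in K$ with $v(\alpha-b)\ge 0$, and likewise no $b\in K$ with $v(\beta-b)\ge -1$. I would argue by contradiction in the style of the concrete example in Section~\ref{exampleeasy}: if such $b$ existed, pick $\ell$ with $b\in K_\ell$; then from $v(\alpha-c_\ell)=-\pi/p^{\ell+1}<0\le v(\alpha-b)$ and the ultrametric inequality we get $v(b-c_\ell)=v\bigl((\alpha-c_\ell)-(\alpha-b)\bigr)=-\pi/p^{\ell+1}$. But $b-c_\ell\in K_\ell$, so $-\pi/p^{\ell+1}\in vK_\ell=G_\ell$, contradicting \eqref{equianfgmarl}. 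The analogous computation for $\beta$ uses $v(e_\ell)=-(p+1)/p^{\ell+1}$ and the fact that $-(p+1)/p^{\ell+1}=-\,p/p^{\ell+1}-1/p^{\ell+1}=-1/p^{\ell}-1/p^{\ell+1}$, and one checks $v(\beta-e_\ell)=-(p+1)/p^{\ell+1}\notin G_\ell$ again by \eqref{ratindoenset} and \eqref{equatsibestigl} (the exponent of $\pi$ is $0$ while the rational part is not in $\frac1{r_1}\Z+\cdots+\frac1{r_\ell}\Z+\frac{p}{r_{\ell+1}}\Z$ since $r_1=p$ forces $-1/p^{\ell+1}$ to be non-integral over that lattice). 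The only subtlety to be careful about is whether $c_\ell$ and $e_\ell$ genuinely lie in $K_\ell$ and have the claimed distances to $\alpha,\beta$; this is a direct telescoping computation from $AS(\alpha)=t^{-\pi}$, $AS(\beta)=t^{-p-1}$ exactly as in the $a_1,c_\ell$ computation of Section~\ref{exampleeasy}, using that applying $AS$ repeatedly peels off one term at a time.

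Having established \eqref{impodist}, I would remark that by Corollary~\ref{corkaplansky} both $d_1(\alpha)$ and $d_1(\beta)$ being of the form $\gamma^-$ with $d_1$ a power of $p$ is consistent (here $d_1=p$ for each, and the minimal polynomials $x^p-cx-d$ of the relevant degree-$p$ approximating elements are exactly the Artin--Schreier ones $x^p-x-t^{-\pi}$ and $x^p-x-t^{-p-1}$), which also pins down that $\alpha,\beta$ are themselves the relevant Okutsu-length-one approximants and will feed into the later claim $\#S_\theta=2>1=\dep(\theta)$.
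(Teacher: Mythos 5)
The treatment of $\alpha$ is correct and coincides with the paper's: approximate by $c_\ell=a_1+\cdots+a_\ell\in K_\ell$, get $v(\alpha-c_\ell)=-\pi/p^{\ell+1}\to 0^-$, and rule out $v(\alpha-b)\ge 0$ by the ultrametric identity $v(b-c_\ell)=-\pi/p^{\ell+1}\notin vK_\ell$. However, your treatment of $\beta$ (and of $t^{-1/p}$) contains a fatal error. The partial sums $e_\ell=t^{-\frac{p+1}{p}}+\cdots+t^{-\frac{p+1}{p^\ell}}$ do \emph{not} lie in $K$: already $t^{-\frac{p+1}{p}}=t^{-1}\cdot t^{-\frac1p}$ would force $t^{-\frac1p}\in K$, which is impossible since $t^{-\frac1p}-d_\ell=t^{-1/r_{\ell+1}}$ has value $-1/r_{\ell+1}\notin vK_\ell$ by \eqref{equianfgmarl}; and the later terms $t^{-\frac{p+1}{p^j}}$, $j\ge 2$, have values $-\frac1{p^{j-1}}-\frac1{p^j}\notin G_\ell$ for any $\ell$. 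Worse, even if the $e_\ell$ did lie in $K$, your computation $v(\beta-e_\ell)=-(p+1)/p^{\ell+1}$ gives a sequence converging to $0$, not to $-1$, so your argument would establish $d_1(\beta)\ge 0^-$ and thereby \emph{refute} the statement $d_1(\beta)=-1^-$ rather than prove it. The contradiction step also collapses for $\beta$: to conclude $v(b-e_\ell)=v(\beta-e_\ell)$ from $v(\beta-b)\ge -1$ you would need $v(\beta-e_\ell)<-1$, but $-(p+1)/p^{\ell+1}>-1$ for $\ell\ge 1$.

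The missing idea is the one the paper uses: the correct degree-one approximants of $\beta$ are $t^{-1}d_\ell$ with $d_\ell=b_1+\cdots+b_\ell=t^{-\frac1p}-t^{-1/r_{\ell+1}}\in K_\ell$, which give $v(\beta-t^{-1}d_\ell)=-1-\frac1{r_{\ell+1}}\to -1^-$; equivalently, one first proves $d_1\bigl(t^{-\frac1p}\bigr)=0^-$ using the $d_\ell$ (approximation from the $b_j$'s, exclusion from \eqref{equianfgmarl}) and then observes that multiplication by $t^{-1}$ shifts the cut by $-1$, while the tail $t^{-\frac{p+1}{p^2}}+\cdots$ of $\beta$ has value $>-1$ and so does not interfere. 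Your proposal never uses the elements $b_j$ or their partial sums $d_\ell$ at all, so the lower bound $d_1\bigl(t^{-\frac1p}\bigr)\ge 0^-$ is not established either; the suggested ``cofinality by adding a fixed element of value $-\pi/p$'' does not work because $\alpha-t^{-\frac1p}$ is not an element of $K$, so approximants of $\alpha$ cannot be translated into approximants of $t^{-\frac1p}$.
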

\begin{proof}
It is easy to show that if $d_1\left(t^{-\frac{1}{p}}\right)=0^-$, then $d_1(\beta)=-1^{-}$.

If we set
\[
c_\ell=a_1+\ldots+a_{\ell}\mbox{ and }d_{\ell}=b_1+\ldots+b_\ell,
\]
for every $\ell\in\N$, $\ell>1$, then
\[
v(\alpha-c_\ell)=-\frac{\pi}{p^{\ell+1}} \mbox{ and }v(t^{-\frac{1}{p}}-d_\ell)=-\frac{1}{r_{\ell+1}}.
\]
Hence, $d_1(\alpha)\geq 0^-$ and $d_1\left(t^{-\frac{1}{p}}\right)\geq 0^-$.

Suppose, aiming for a contradiction, that $d_1(\alpha)>0^-$ or $d_1\left(t^{-\frac{1}{p}}\right)> 0^-$. Then there would exist $b\in K$ such that
\[
v(b-\alpha)\geq 0\mbox{ or }v(b-t^{-\frac 1p})\geq 0.
\]
Taking $\ell\in\N$ such that $b\in K_\ell$ we would obtain that
\[
v(b-c_\ell)=-\frac{\pi}{p^{\ell+1}}\mbox{ or }v(b-d_\ell)=-\frac{1}{r_{\ell+1}}.
\]
This is a contradiction to \eqref{equianfgmarl}.
\end{proof}

Set $\theta=\alpha+t\beta$ so that
\[
{\rm trn}_0(\theta)=t^{-\frac{1}{p}}+t^{-\frac{\pi}{p}}+\ldots+t^{-\frac{\pi}{p^\ell}}+\ldots
\]
It follows from \cite{NND} that all the conjugates of $\theta$ over $K$ are
\[
\theta+t\F_p+\F_p.
\]
Hence $\theta$ is a generator of $K(\alpha,\beta)$ over $K$ and $\#S_\theta=2$.
\begin{Lema}\label{dephtonfisone}
We have $\dep(\theta)=1$.
\end{Lema}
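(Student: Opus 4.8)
The goal is to show $\dep(\theta)=1$, i.e.\ that $\theta$ admits an Okutsu sequence of length one; equivalently, by Theorem \ref{OSdepth}, that no element $b\in\overline K$ of degree $<\deg_K(\theta)=p^2$ comes ``closer'' to $\theta$ than the elements of $K$ do. Concretely, I would show that
\[
d_1(\theta)=\max\{v(\theta-b)\mid b\in\overline K,\ \deg_K(b)<p^2\}^+
\]
is already realized (as a cut) by approximations from $K$ itself, so that the first Okutsu set can be taken to be $A_0\subset K$ and $A_1=\{\theta\}$. Since $\deg_K(\theta)=p^2$, any proper subextension has degree $p$, so the only intermediate fields are the $K(\gamma)$ with $\gamma$ an AS element; thus it suffices to control $v(\theta-b)$ for $b$ ranging over $K$ and over all degree-$p$ elements.

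The first step is to pin down $d_1(\theta,K)$ from the explicit expansion ${\rm trn}_0(\theta)=t^{-1/p}+\sum_{\ell\ge1}t^{-\pi/p^\ell}$. Using $c_\ell=a_1+\dots+a_\ell$ and $d_\ell=b_1+\dots+b_\ell$, one has $v(\alpha-c_\ell)=-\pi/p^{\ell+1}$ and $v(t^{-1/p}-d_\ell)=-1/r_{\ell+1}$ from the previous lemma; combining, the element $t d_\ell + \text{(something in }K\text{)}$ — more precisely an element of $K_\ell$ assembled from $c_\ell$, $d_\ell$ and the relation $AS(\beta)$ expansion — approximates $\theta$ to valuation tending to $0^-$ but never reaching $0$. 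So I expect $d_1(\theta)=0^-$, matching $d_1(\alpha)$ (note $t\beta$ has $d_1(t\beta)=vt+d_1(\beta)=1+(-1^-)=0^-$ as well, which is exactly the ``equal main invariant'' regime flagged in the Remark). The key point is then: the approximation is genuinely by elements of $K$, not by a degree-$p$ element that does strictly better.

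The second and main step is to rule out any degree-$p$ element $\gamma$ with $v(\theta-\gamma)>0^-$, i.e.\ $v(\theta-\gamma)\geq 0$. Suppose such a $\gamma$ exists, with $K(\gamma)$ an AS extension. The strategy is to push everything into $\Ha=k((t^\R))$ and use the ``no finite limits'' property together with rational independence of $\{\pi\}\cup\{r_i\}$. If $v(\theta-\gamma)\ge0$ then ${\rm trn}_0(\gamma)={\rm trn}_0(\theta)=t^{-1/p}+\sum_\ell t^{-\pi/p^\ell}$, so $\gamma$ has an infinite support accumulating at $0$ with exponents $-\pi/p^\ell$ and $-1/p$. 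I would then derive a contradiction by showing such a $\gamma$ cannot be an AS element over $K$: apply $AS$ and examine leading terms, or — more robustly — use that $K$ has no finite limits and $[K(\gamma):K]=p$ forces $\gamma$ (after subtracting a suitable element of $K$, which changes nothing above valuation $0$) to satisfy $v(AS(\gamma)-c)$ large for some $c\in K$, then track the tail $\sum_\ell t^{-\pi/p^\ell}$ and $t^{-1/p}$ through $AS$ to see the required $c\in K$ would need an exponent not in $vK_\ell$ for any $\ell$, contradicting \eqref{equianfgmarl}. This is essentially the same mechanism as in the proof of \eqref{impodist}, now applied simultaneously to the $\alpha$-tail and the $t^{-1/p}$-tail, exploiting that these live in rationally independent ``directions'' so they cannot cancel against each other or against anything in $K_\ell$.

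The expected main obstacle is precisely this second step: showing that the two interleaved tails (the $\pi$-tail coming from $\alpha$ and the $r_i$-tail coming from $\beta$ via $t^{-1/p}$) cannot conspire so that some single degree-$p$ element $\gamma$ absorbs both better than $K$ does. The rational independence condition \eqref{ratindoenset} is the tool: it guarantees that in any element of $K_\ell$ the monomials have pairwise distinct valuations (as shown in the value-group lemma), so an approximation to $\theta$ by a degree-$p$ element would force a specific ``forbidden'' exponent of the form $\pi/p^{\ell+1}$ or $1/r_{\ell+1}$ to lie in $vK_\ell$, which \eqref{equianfgmarl} forbids. Once degree-$p$ competitors are excluded and $d_1(\theta)=0^-$ is realized by $K$-approximations, $[A_0=\text{(the }K\text{-approximating set)},A_1=\{\theta\}]$ is an Okutsu sequence of length $1$, and Theorem \ref{OSdepth} gives $\dep(\theta)=1$.
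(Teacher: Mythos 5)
Your overall strategy (reduce to ruling out a degree-$p$ element $\epsilon$ with $v(\theta-\epsilon)\ge 0$, then exploit the explicit Hahn-series expansion, the ``no finite limits'' property and the rational independence \eqref{ratindoenset}) is the same as the paper's, and you correctly locate where the difficulty sits. But there is a genuine gap at the start of your main step: you restrict the degree-$p$ competitors to AS elements generating intermediate fields of $K(\theta)/K$. That restriction is unjustified -- the element $\epsilon$ realizing a better distance is an arbitrary element of $\overline K$ of degree $p$ over $K$, not necessarily inside $K(\alpha,\beta)$ and not necessarily separable. The correct reduction, which the paper takes from Corollary \ref{corkaplansky} (Kaplansky), is that $\epsilon$ may be chosen with minimal polynomial $x^p-c$ (purely inseparable) or $x^p-cx-d$ with an \emph{arbitrary} coefficient $c\in K$. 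The inseparable case is dispatched quickly by ``no finite limits'', but the case $x^p-cx-d$ with general $c$ is where essentially all of the work lies, and your sketch does not engage with it: one must first show, using that $c$ and $d$ have no finite limits and that $\epsilon$ has infinite support accumulating at $0$, that the leading term of $c$ is $1\cdot t^{0}$, and only then can one compare tails.

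Your proposed closing mechanism is also not the one that actually works. You predict that a good degree-$p$ approximant would force a forbidden exponent $\pi/p^{\ell+1}$ or $1/r_{\ell+1}$ into $vK_\ell$, contradicting \eqref{equianfgmarl} directly. In the paper the contradiction is one level removed: after writing $\epsilon'=\epsilon-t^{-1/p}$ and expanding $\epsilon^p-c\epsilon$, the term $c\,t^{-1/p}$ survives, and the identity ${\rm trn}_0(d)={\rm trn}_0\bigl(t^{-1}-ct^{-1/p}+t^{-\pi}+(1-c)(t^{-\pi/p}+\cdots+t^{-\pi/p^n})\bigr)$ shows that $ct^{-1/p}$ would be approximated by elements of $K$ to valuation $\ge 0$, i.e.\ $d_1(ct^{-1/p})\ge 0^-$ would be attained, contradicting \eqref{impodist} (which is where \eqref{equianfgmarl} was used). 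In other words, the two tails do interact through the linear term $c\epsilon$, and controlling that interaction requires pinning down $c$ modulo higher-order terms first. As written, your argument would not go through without supplying the Kaplansky reduction and this analysis of $c$; I recommend reworking the second step around Corollary \ref{corkaplansky} and the distance statement \eqref{impodist} rather than around \eqref{equianfgmarl} alone.
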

\begin{proof}
We will show that for $\epsilon\in\overline K$ we have
\[
v(\theta-\epsilon)>d_1(\theta)\Lra \deg_K(\epsilon)=p^2
\]
and the result will follow from Theorem \ref{OSdepth}. Since $d_1(\theta)\geq 0^-$, if $v(\theta-\epsilon)>d_1(\theta)$, then
\[
{\rm trn}_0(\epsilon)=t^{-\frac{1}{p}}+t^{-\frac{\pi}{p}}+\ldots+t^{-\frac{\pi}{p^\ell}}+\ldots
\]
By Corollary \ref{corkaplansky}, if $\deg_K(\epsilon)<p^2$, then $\epsilon$ can be chosen as a root of a polynomial of the form
\[
f(x)=x^p-c\mbox{ or }x^p-cx-d.
\]
If $\epsilon$ were a root of $x^p-c$, then
\[
{\rm trn}_0(c)=t^{-1}+t^{-\pi}+\ldots+t^{-\frac{\pi}{p^{\ell-1}}}+\ldots
\]
and this is a contradiction since $K$ has no finite limits.

Suppose now that $f(x)=x^p-cx+d$. Since $c\in K$ has no finite limits, there exist $c_1,\ldots,c_r\in k$ and $q_1,\ldots,q_r\in\Gamma$, $q_1<\ldots<q_r<\frac \pi p$, such that
\[
{\rm trn}_{\frac{\pi}{p}}\left(c\right)=c_1 t^{q_1}+\ldots+c_rt^{q_r}.
\]
We claim that $q_1=0$ and $c_1=1$. Since $\#\supp({\rm trn}_0(\epsilon))=\infty$, if $q_1<0$, then $\#{\rm supp}({\rm trn}_{q_1}(c\epsilon))=\infty$. On the other hand, since $\#{\rm supp}({\rm trn}_{q_1}(\epsilon^p))<\infty$ we would obtain that the support of
\[
{\rm trn}_{q_1}(d)={\rm trn}_{q_1}(\epsilon^p)-{\rm trn}_{q_1}(c\epsilon)
\]
must be an infinite set. This is a contradiction to the fact that $K$ has no finite limits.  The assumption that $q_1>0$ would lead to a similar contradiction. Hence $q_1=0$. Suppose, aiming for a contradiction, that $c_1\neq 1$. Then we would have infinitely many negative coefficients of $\epsilon^p-c\epsilon$ equal to
\[
1-c_1\neq 0.
\]
This is again a contradiction to the fact that $d$ has no finite limits.

Take $n\in\N$ such that $\frac \pi {p^{n}}> q_2\geq \frac \pi {p^{n+1}}$. Then
\[
{\rm trn}_{-q_2}(\epsilon)=t^{-\frac{1}{p}}+t^{-\frac{\pi}{p}}+\ldots+t^{-\frac{\pi}{p^n}}.
\]
Also, write $\epsilon'=\epsilon-t^{-\frac{1}{p}}$ so that ${\rm trn}_{0}(\epsilon'^p-\epsilon')=t^{-\pi}$. Then
\begin{displaymath}
\begin{array}{rcl}
{\rm trn}_{0}(d)&=&{\rm trn}_{0}(\epsilon^p-c\epsilon)\\[8pt]
&=&{\rm trn}_{0}\left(t^{-1}-ct^{-\frac{1}{p}}+(\epsilon'^p-\epsilon')+(1-c)\epsilon'\right)\\[8pt]
&=&{\rm trn}_{0}\left(t^{-1}-ct^{-\frac{1}{p}}+t^{-\pi}+(1-c)\left(t^{-\frac{\pi}{p}}+\ldots+t^{-\frac{\pi}{p^n}}\right)\right)
\end{array}.
\end{displaymath}
Since
\[
d-t^{-1}-t^{-\pi}-(1-c)\left(t^{-\frac{\pi}{p}}+\ldots+t^{-\frac{\pi}{p^n}}\right)\in K
\]
this implies that $d_1(ct^{-\frac{1}{p}})\geq 0^-$ and this is a contradiction to \eqref{impodist}.
\end{proof}

\begin{Obs}\label{examplediferamificidela}
A variation of the results above shows that
\[
d_1(\alpha,K(\beta))=0^-\mbox{ and }d_1(\beta,K(\alpha))=-1^-.
\]
\end{Obs}

\section{Ramification ideals}\label{depthramifi}

The main goal of this section is to understand ramification ideals in terms of the results in this paper. In Section \ref{beglasldjj} we present a generalization of \cite[Theorem 3.4]{KR}. In Section \ref{sectioncomposiu} we use the results from \cite{NNASD} to show some results about ramification ideals. In Section \ref{computationcassesAsd} we compute the ramification ideals in all the examples of \cite{NNASD}. In all those cases, the only ramification ideal is the maximal ideal.

\subsection{A result about ramification ideals}\label{beglasldjj}
For this section, suppose that $\mathcal E=(L/K,v)$ is a finite Galois extension of valued fields. For a subgroup $H$ of $\gal(L/K)$ denote by $K_H$ the fixed field of $H$, i.e.,
\[
K_H=\{b\in L\mid \sigma b=b\mbox{ for every }\sigma\in H\}.
\]
Consider the final segment of $vL$ defined by
\[
S_H:=\min S(\theta,H)-D_1(\theta,K_H).
\]

The main result of this section is Theorem \ref{generalizakuhlm} below. This result is motivated by \cite{KR} and its proof is a variation of the results in that work. Before we state and prove it, we present Lemma \ref{lemakapland} that follows easily from a classic result of Kaplansky (see \cite[Lemma 8 and Lemma 10]{Kap}).

Let $p$ be the \textbf{characteristic exponent of $(K,v)$}, i.e., $p=1$ if ${\rm char}(Kv)=0$ and $p={\rm char}(Kv)$ otherwise. For all $s\in \N$, the \textbf{$s$-th Hasse-Schmidt derivative} $\partial_s$ on $\kx$ is defined by:
$$
f(x+y)=\sum_{0\le s}(\ps f)y^s \quad\mbox{for all } \,f\in\kx,
$$
where $y$ is another indeterminate. 

\begin{Lema}\label{lemakapland}
Suppose that the set $D_1(\theta,K)$ does not have a maximum and that $f\in K[x]$ is a polynomial of degree $d$ for which there exist $\gamma,\beta_1,\ldots,\beta_d\in v L$ such that for every $c\in K$ we have
\[
v(\theta-c)\geq \gamma\Lra \beta_i=v\left(\partial_if(\theta)\right)=v\left(\partial_if(c)\right)\mbox{ for every }i, 1\leq i\leq d.
\]
Then there exist $h\in \{1,\ldots,d\}$, which is a power of $p$, and $\gamma'\in \Gamma$ such that
\[
\beta_h+hv(\theta-c)<\beta_i+iv(\theta-c)
\]
for every $c\in K$ for which $v(\theta-c)\geq \gamma'$ and $i\neq h$. Moreover, if $\deg(f)<p$, then $h=1$ and for $c\in K$ for which $v(\theta-c)>\gamma'$  we have
\[
v(f(\theta))=v(f(c))<\beta_1+v(\theta-c).
\]
\end{Lema}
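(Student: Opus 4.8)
The plan is to reduce everything to the Hasse--Schmidt (Taylor) expansion of $f$ about points $c\in K$ close to $\theta$, so that the statement becomes a piece of convex geometry over $\Gamma$ plus one binomial computation; the only earlier result I expect to invoke is Corollary \ref{corkaplansky}. Throughout, for $c\in K$ with $\gamma_c:=v(\theta-c)\ge\gamma$ I would use the identity $f(\theta)=\sum_{i\ge 0}(\partial_i f)(c)(\theta-c)^i$ (and the analogous expansions of the $\partial_j f$): by hypothesis the term $(\partial_i f)(c)(\theta-c)^i$ then has value $\beta_i+i\gamma_c$ for $1\le i\le d$. Since $D_1(\theta,K)$ has no maximum, $\{v(\theta-c)\mid c\in K\}=D_1(\theta,K)$ is cofinal below its cut $D_1(\theta,K)^+$, so $\gamma_c$ may be taken arbitrarily large below that cut; this is used repeatedly.

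First I would produce $h$ and $\gamma'$. The finitely many affine maps $\gamma\mapsto\beta_i+i\gamma$ ($1\le i\le d$, with the convention that the value is $\infty$ when $\partial_i f=0$; note $\partial_d f=\op{lc}(f)\ne 0$, so the lower envelope is not identically $\infty$) pairwise coincide at most at one point of $\Gamma\otimes\Q$, hence their lower envelope is realized, for $\gamma$ large enough below $D_1(\theta,K)^+$, by a single index $h\in\{1,\dots,d\}$. Choosing $\gamma'\in D_1(\theta,K)$ with $\gamma'\ge\gamma$ and larger than the finitely many crossing values lying below the cut (possible by cofinality), one gets that for every $c\in K$ with $v(\theta-c)\ge\gamma'$ and every $i\ne h$ the inequality $\beta_h+h\,v(\theta-c)<\beta_i+i\,v(\theta-c)$ holds, which is the first assertion; note that such $\gamma'$ and such $c$ exist precisely because $D_1(\theta,K)$ has no maximum.

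Next I would show $h$ is a power of $p$, arguing by contradiction: write $h=p^a m$ with $p\nmid m$ and $m\ge 2$, and test against $\partial_{p^a}f$ by the shift formula
\[
(\partial_{p^a}f)(\theta)=(\partial_{p^a}f)(c)+\sum_{j\ge 1}\binom{p^a+j}{p^a}(\partial_{p^a+j}f)(c)(\theta-c)^j .
\]
Kummer's theorem gives $p\nmid\binom{p^a m}{p^a}$ (because $p\nmid m$), so $v\bigl(\binom{h}{p^a}\bigr)=0$; combined with the previous step (which makes $h$ the \emph{unique} minimizer of $i\mapsto\beta_i+i\gamma_c$ and forces $h>p^a$), this shows the $j=h-p^a$ summand has value exactly $\beta_h+(h-p^a)\gamma_c$ and strictly dominates every other $j\ge 1$ summand, as well as $(\partial_{p^a}f)(c)$, whose value $\beta_{p^a}$ exceeds $\beta_h+(h-p^a)\gamma_c$. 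Hence $v\bigl((\partial_{p^a}f)(\theta)\bigr)=\beta_h+(h-p^a)\gamma_c$ would depend on $c$ (since $h-p^a>0$), contradicting the hypothesis that it equals the fixed element $\beta_{p^a}$, there being infinitely many admissible $c$ with pairwise distinct $v(\theta-c)$. So $m=1$.

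For the last clause, if $\deg f<p$ then the only power of $p$ in $\{1,\dots,d\}$ is $1$, so $h=1$; assuming (harmlessly) $d\ge 1$, also $f'=\partial_1 f\ne 0$ and $\beta_1=v(f'(\theta))$. From $f(\theta)=\op{lc}(f)\prod_\rho(\theta-\rho)^{m_\rho}$ and $f'(\theta)=f(\theta)\sum_\rho m_\rho/(\theta-\rho)$ I would get $v(f(\theta))\le\beta_1+\max_\rho v(\theta-\rho)$. By Corollary \ref{corkaplansky} the first minimal degree $d_1$ of the distances of $\theta$ is a power of $p$, hence $d_1>d\ge\deg_K\rho$ for every root $\rho$ of $f$, so no $v(\theta-\rho)$ lies above $D_1(\theta,K)$ and therefore $M:=\max_\rho v(\theta-\rho)<D_1(\theta,K)^+$. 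Enlarging $\gamma'$ to be $\ge M$, for every $c\in K$ with $v(\theta-c)>\gamma'$ one has $v(f(\theta))\le\beta_1+M<\beta_1+v(\theta-c)=v\bigl(\sum_{i\ge 1}(\partial_i f)(c)(\theta-c)^i\bigr)$, and reading off $f(\theta)=f(c)+\sum_{i\ge 1}(\partial_i f)(c)(\theta-c)^i$ gives $v(f(c))=v(f(\theta))<\beta_1+v(\theta-c)$. The step I expect to be the main obstacle is the contradiction in the third paragraph: extracting from the stability hypothesis that the winning index must be a $p$-power, where choosing the right test derivative $\partial_{p^a}f$ and pinning down $v\bigl(\binom{p^a m}{p^a}\bigr)=0$ via Kummer's theorem is exactly what prevents the decisive summand from cancelling; the other steps should be routine once this is in place.
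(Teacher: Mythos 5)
Your proof is correct, and in substance it follows the same route as the paper — with the caveat that the paper does not actually prove this lemma at all: it only remarks that it ``follows easily from a classic result of Kaplansky'' and points to \cite[Lemma 8 and Lemma 10]{Kap}. What you have written is essentially a reconstruction of Kaplansky's argument: the Hasse--Schmidt expansion about nearby $c\in K$, the lower envelope of the finitely many affine maps $\gamma\mapsto\beta_i+i\gamma$ (whose pairwise crossings are finite in number, so that cofinality of $D_1(\theta,K)$ in its own cut yields a unique strictly dominating index $h$ on a tail), and the test of $\partial_{p^a}f$ against its expansion, where Lucas/Kummer gives $v\bigl(\binom{p^am}{p^a}\bigr)=0$ and hence a $c$-dependent value $\beta_h+(h-p^a)v(\theta-c)$ contradicting the constancy hypothesis unless $m=1$. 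The only place you genuinely depart from the cited source is the final clause: instead of invoking Kaplansky's dichotomy for $v(f(a_\rho))$ along a pseudo-convergent sequence, you bound $v(f(\theta))\le\beta_1+\max_\rho v(\theta-\rho)$ via the logarithmic derivative (legitimate because every multiplicity $m_\rho\le d<p$ is prime to $p$) and then use Corollary \ref{corkaplansky} to guarantee that every root $\rho$ of $f$, having degree $<p\le d_1$, satisfies $v(\theta-\rho)\le s$ for some $s\in D_1(\theta,K)$, so that $v(\theta-c)$ can be pushed strictly past all of them; this is a clean, self-contained alternative. Two harmless remarks: since the lemma posits $\beta_i\in vL$, each $\partial_if$ is automatically nonzero, so your $\infty$-convention is only a relaxation; and your $p$-power step tacitly assumes positive residue characteristic (for characteristic exponent $p=1$ one tests against $\partial_1f$ directly), which is the only case the paper uses.
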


\begin{Obs}
If $\deg(f)<p$, then it follows from the above result that $h=1$. This fact was strongly used in \cite{KR} without being mentioned.
\end{Obs}

For a subgroup $H$ of $\gal(L/K,v)$ and a generator $\theta$ of $L/K$ we say that $(H,\theta)$ satisfies  \textbf{(HC)} if for every $a\in K_H$ and every $\sigma\in H$ we have
\[
v\left(\frac{\sigma\theta-\theta}{\theta-a}\right)\geq 0.
\]
\begin{Obs}
If $(K(\theta)/K,v)$ is a unibranched immediate Galois extension, then for each subgroup $H$ of $\gal(L/K)$ it follows from \cite[Lemma 3.2 (1)]{KR} that
\[
v\left(\frac{\sigma \theta-\theta}{\theta-a}\right)>0\mbox{ for every }a\in K_H\mbox{ and every }\sigma\in H.
\]
In particular, $(H,\theta)$ satisfies \textbf{(HC)}.
\end{Obs}
\begin{Teo}\label{generalizakuhlm}
Assume that $(K(\theta)/K,v)$ is an immediate Galois extension. For each subgroup $H$ of $\gal(K(\theta)/K)$ we have
\begin{equation}\label{comparisset}
I_{S_H}\subseteq I_H. 
\end{equation}
Moreover, if $|H|\leq p$ and $(H,\theta)$ satisfies \textbf{(HC)}, then the above inclusion is an equality of sets.
\end{Teo}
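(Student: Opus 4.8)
The plan is to prove the two inclusions separately, following the strategy of \cite[Theorem 3.4]{KR} but tracking carefully where the hypothesis $|H|\le p$ and (HC) are used. For the inclusion $I_{S_H}\subseteq I_H$, the key observation is that, by definition, $I_H=\bigcup_{\sigma\in H\setminus\{\mathrm{id}\}} I_\sigma$ where $I_\sigma=I_{S_\sigma}$ with $S_\sigma=\{v((\sigma b-b)/b)\mid b\in L^*\}$. First I would fix $\sigma\in H$ realizing $\min S(\theta,H)=v(\sigma\theta-\theta)$ and, for any $a\in K_H$, apply the element $b=\theta-a\in L^*$ (note $\sigma(\theta-a)-(\theta-a)=\sigma\theta-\theta$). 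This gives $v((\sigma\theta-\theta)/(\theta-a))\in S_\sigma$, so every value of the form $v(\sigma\theta-\theta)-v(\theta-a)$ lies in $S_\sigma\subseteq S_H$ (in the sense that it is $\ge$ some element of $S_H$, after taking the final segment generated). Since $D_1(\theta,K_H)=\{v(\theta-a)\mid a\in K_H\}$ and $S_H=\min S(\theta,H)-D_1(\theta,K_H)$, the final segment $I_{S_H}$ is contained in $I_{S_\sigma}\subseteq I_H$. This direction should be essentially formal and does not need the order hypothesis.

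For the reverse inclusion under $|H|\le p$ and (HC), I would show $I_\sigma\subseteq I_{S_H}$ for every $\sigma\in H\setminus\{\mathrm{id}\}$, which by the union definition suffices. Fix $\sigma$ and an arbitrary $b\in L^*$; I want to show $v((\sigma b-b)/b)\ge s$ for some $s\in S_H$, i.e.\ $v(\sigma b-b)-v(b)\ge \min S(\theta,H)-v(\theta-a)$ for suitable $a\in K_H$. Write $b=f(\theta)$ for some $f\in K_H[x]$ of degree $d<|H|\le p$ (using $[L:K_H]=|H|$ and that $K_H$-conjugates of $\theta$ are the $\tau\theta$, $\tau\in H$). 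Then $\sigma b - b = f(\sigma\theta)-f(\theta)=\sum_{s\ge 1}(\partial_s f)(\theta)(\sigma\theta-\theta)^s$ via the Hasse–Schmidt expansion. The goal is to control $v$ of this sum; since the extension is immediate, $D_1(\theta,K_H)$ has no maximum, so I can choose $a\in K_H$ with $v(\theta-a)$ arbitrarily large, apply Lemma \ref{lemakapland} to $f$ over $K_H$ (the hypotheses of that lemma — existence of $\gamma$, $\beta_i$ with the stated equalities — follow from the immediacy, exactly as in \cite{KR}), and because $\deg f<p$ conclude $h=1$, i.e.\ the $s=1$ term $(\partial_1 f)(\theta)(\sigma\theta-\theta)$ strictly dominates all higher terms, so $v(\sigma b-b)=v((\partial_1 f)(\theta))+v(\sigma\theta-\theta)$. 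Combining with $v(f(\theta))=v(f(a))<v((\partial_1 f)(\theta))+v(\theta-a)$ from the same lemma yields $v(\sigma b-b)-v(b) > v(\sigma\theta-\theta)-v(\theta-a)\ge \min S(\theta,H)-v(\theta-a)$, an element of $S_H$; here (HC) guarantees $v(\sigma\theta-\theta)-v(\theta-a)\ge 0$, so we stay inside $\mathcal M_L$ and the comparison of final segments is legitimate.

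The main obstacle I anticipate is the bookkeeping that makes the dominance argument uniform in $b$: Lemma \ref{lemakapland} produces a threshold $\gamma'$ depending on $f$, but for a fixed $f$ (hence fixed $d$) and varying $a$ this is fine, and every $b\in L^*$ gives some $f$ of bounded degree $<p$, so no uniformity across $b$ is actually needed — one handles each $b$ (equivalently each $f$) individually. A subtler point is ensuring that the polynomial representation $b=f(\theta)$ can be taken with $f\in K_H[x]$ and $\deg f<|H|$, and that the values $v((\partial_s f)(\theta))$ behave as the lemma requires (equalities with $v((\partial_s f)(a))$ for $a$ close to $\theta$); this is where immediacy is essential and where the argument of \cite{KR} must be transcribed with $K$ replaced by $K_H$. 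Finally, one must check the edge case $p=1$ (equal characteristic zero residue field), where $|H|\le 1$ forces $H=\{\mathrm{id}\}$ and the statement is vacuous, and the case $\deg f=0$ (i.e.\ $b\in K_H^*$), where $\sigma b-b=0$ and there is nothing to prove.
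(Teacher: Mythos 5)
Your proposal is correct and follows essentially the same route as the paper: the easy inclusion via the elements $b=\theta-a$, and the reverse inclusion by writing $b=f(\theta)$ with $\deg f<|H|\le p$, expanding $f(\sigma\theta)-f(\theta)$ with Hasse--Schmidt derivatives over $K_H$, and invoking Lemma \ref{lemakapland} so that the first-order term dominates and $v(b)=v(f(a))$ for $a$ close enough to $\theta$. One small correction of emphasis: the role of \textbf{(HC)} is not merely to ``stay inside $\mathcal M_L$'' --- the inequality $v(\sigma\theta-\theta)\ge v(\theta-a)$ is exactly what lets you transfer the dominance $v\left(\partial_1 f(\theta)(\theta-a)\right)<v\left(\partial_i f(\theta)(\theta-a)^i\right)$ supplied by the lemma into the dominance $v\left(\partial_1 f(\theta)(\sigma\theta-\theta)\right)<v\left(\partial_i f(\theta)(\sigma\theta-\theta)^i\right)$ that you actually need to evaluate $v(\sigma b-b)$, since the lemma itself only speaks of elements $\theta-c$ with $c\in K_H$.
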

\begin{proof}
For any $a\in K_H$ and $\sigma\in H$ we have
\[
v\left(\frac{\sigma(\theta-a)-(\theta-a)}{\theta-a}\right)=v\left(\frac{\sigma\theta-\theta}{\theta-a}\right)=v\left(\sigma\theta-\theta\right)-v(\theta-a).
\]
In particular, we have \eqref{comparisset}.

Assume now that $[L:K_H]=|H|\leq p$. Since $(L/K_H,v)$ is immediate we deduce that $D_1(\theta,K_H)$ does not have a maximum. For $f\in K_H[x]$ we denote by $\partial_if$ the $i$-th Hasse-Schmidt derivative of $f$ over the field $K_H$. From our assumptions, we deduce from Lemma \ref{lemakapland} that for every $a\in K_H$ for which $v(\theta-a)$ is large enough and $f\in K_H[x]$ with $\deg(f)<\deg_{K_H}(\theta)\leq p$ we have
\begin{equation}\label{equation1parafoas}
v\left(\partial_1 f(\theta)(\theta-a)\right)<v\left(\partial_i f(\theta)(\theta-a)^i\right)\mbox{ for every }i>1.
\end{equation}

For $b\in L$ we can write $b=f(\theta)$ for some $f\in K_H[x]$, $\deg(f)<\deg_{K_H}(\theta)$.
Fix an element $\sigma\in H$ and $a\in K_H$ satisfying \eqref{equation1parafoas} for $f$. Since $(H,\theta)$ satisfies \textbf{(HC)} we have
\begin{equation}\label{equation2parafoas}
v\left(\frac{\sigma\theta-\theta}{\theta-a}\right)\leq v\left(\frac{\sigma\theta-\theta}{\theta-a}\right)^i\mbox{ for every }i>1.
\end{equation}
From \eqref{equation1parafoas} and \eqref{equation2parafoas} we deduce that
\[
v\left(\partial_1f(\theta)(\sigma \theta-\theta)\right)<v\left(\partial_i f(\theta)(\sigma\theta-\theta)^i\right)\mbox{ for every }i>1.
\]
Since
\[
f(\sigma\theta)-f(\theta)=\sum_{i=1}^{\deg(f)}\partial_if(\theta)(\sigma\theta-\theta)^i
\]
we deduce that
\begin{equation}\label{anothequautl}
v(\sigma b-b)=v(f(\sigma \theta)-f(\theta))=v\left(\partial_1f(\theta)(\sigma \theta-\theta)\right).
\end{equation}
It also follows from Lemma \ref{lemakapland} that for $a\in K_H$, for which $v(\theta-a)$ is large enough, we have
\begin{equation}\label{anothequaut2}
v(b)=v(f(a))<v\left(\partial_1f(\theta)(\theta-a)\right).
\end{equation}
It follows from \eqref{anothequautl} and \eqref{anothequaut2} that
\[
v\left(\frac{\sigma b-b}{b}\right)>v(\sigma\theta-\theta)-v(\theta-a).
\]

Therefore, for every $\sigma\in H$ we have $I_\sigma\subseteq I_{S_H}$ and consequently we obtain that $I_H\subseteq I_{S_H}$.
\end{proof}

\subsection{Compositum of defect AS extensions}\label{sectioncomposiu}
In this section we present some results that follow from \cite{NNASD} and the discussions above. 

\begin{Def}
We say that the valued field $(K,v)$, of positive characteristic $p$, satisfies the condition \textbf{(GE)} if for every $n\in\N$ there exist  $c_1,\ldots,c_n\in K$ such that
\begin{equation}\label{equationinte}
v(a_1c_1+\ldots+a_nc_n)=0\mbox{ for every }(a_1,\ldots,a_n)\in \F_p^n\setminus\{(0,\ldots,0)\}.
\end{equation}
\end{Def}

\begin{Obs}
If $K$ admits an infinite subfield $K_0$ which is algebraic over $\F_p$, then $(K,v)$ satisfies the condition \textbf{(GE)}. 
\end{Obs}

\begin{Prop}\label{ramificiudle}
Assume that $(K,v)$ satisfies \textbf{(GE)}. Take defect AS elements $\alpha_1,\ldots,\alpha_n\in\overline K$, linearly disjoint  over $K$. Set $L=K(\alpha_1,\ldots,\alpha_n)$ and for each $i$, $1\leq i\leq n$, set
\[
K_i=K(\alpha_1,\ldots,\alpha_{i-1},\alpha_{i+1},\ldots,\alpha_n)\mbox{ and }D_i=D_1(\alpha_i,K_i).
\]
For every $i$, $1\leq i\leq n$, there exists a subgroup $H$ of $\gal(L/K)$ such that $I_H=I_{-D_i}$.
\end{Prop}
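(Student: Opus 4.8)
The plan is to realize each ideal $I_{-D_i}$ as $I_H$ for a suitably chosen subgroup $H$, by combining the structure of the conjugates of the $\alpha_j$ (as in \cite[Lemma 2.3]{NNASD}) with Theorem \ref{generalizakuhlm}. First I would fix $i$ and let $H=\gal(L/K_i)$, so that $K_H=K_i$ and $[L:K_i]=p$ since $\alpha_i$ is an AS element of degree $p$ that remains of degree $p$ over $K_i$ (here the linear disjointness of the $\alpha_j$ over $K$ is what guarantees $K_i(\alpha_i)=L$ and that the extension $L/K_i$ is still a defect AS extension). Because $L/K_i$ is a defect AS extension, it is immediate, hence $D_1(\alpha_i,K_i)=D_i$ has no maximum, and the Galois group $H$ is cyclic of order $p$ generated by some $\sigma$ with $\sigma\alpha_i=\alpha_i+1$ (after rescaling the AS generator).

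Next I would identify $\min S(\theta,H)$ for the natural generator. Taking $\theta=\alpha_i$ as the generator of $L/K_i$, for every nontrivial $\sigma\in H$ we have $\sigma\alpha_i-\alpha_i\in\F_p^*$, so $v(\sigma\alpha_i-\alpha_i)=0$; hence $S(\alpha_i,H)=\{0\}$ and $\min S(\alpha_i,H)=0$. Therefore
\[
S_H=\min S(\alpha_i,H)-D_1(\alpha_i,K_i)=0-D_i=-D_i,
\]
so that $I_{S_H}=I_{-D_i}$. It then remains to invoke Theorem \ref{generalizakuhlm}: since $L/K_i$ is an immediate Galois extension, $|H|=p\leq p$, and $(H,\alpha_i)$ satisfies \textbf{(HC)} — the latter because a unibranched immediate Galois extension always satisfies \textbf{(HC)} by the Remark following Lemma \ref{lemakapland}, using $v((\sigma\alpha_i-\alpha_i)/(\alpha_i-a))>0$ for all $a\in K_i$ via \cite[Lemma 3.2 (1)]{KR} — we conclude $I_{S_H}=I_H$, i.e. $I_H=I_{-D_i}$, as desired.

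The main obstacle I anticipate is the bookkeeping needed to check that $L/K_i$ really is a defect AS extension of degree $p$: one must verify that $\alpha_i\notin K_i$ (equivalently, that $K_i$ and $K(\alpha_i)$ are linearly disjoint over $K$, which follows from the hypothesis that $\alpha_1,\dots,\alpha_n$ are linearly disjoint over $K$, but this requires citing the appropriate linear-disjointness bookkeeping from \cite{NNASD}), and that the defect of $L/K_i$ equals $p$ rather than $1$ — i.e. that the extension stays immediate. The cleanest route is to note that $[L:K]=p^n$ by linear disjointness while each intermediate step $K(\alpha_1,\dots,\alpha_{j-1})\to K(\alpha_1,\dots,\alpha_j)$ has degree $p$; then a multiplicativity-of-defect argument (or direct appeal to the relevant lemma in \cite{NNASD}) forces each such step, in particular $K_i\to L$, to be an immediate degree-$p$ AS extension. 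Once this structural fact is in hand, the rest of the argument is the short computation above together with a single application of Theorem \ref{generalizakuhlm}.
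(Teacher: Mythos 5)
Your argument is correct and reaches the same conclusion, but it takes a genuinely different (and in one respect leaner) route than the paper. The paper uses the hypothesis \textbf{(GE)} to build a single global generator $\theta=c_1\alpha_1+\cdots+c_n\alpha_n$ of $L/K$, takes $H=\langle\sigma_i\rangle$, observes that $\theta-c_i\alpha_i\in K_i=K_H$ and $v(c_i)=0$ so that $D_1(\theta,K_H)=D_1(\alpha_i,K_i)$ and $\min S(\theta,H)=v(c_i)=0$, and then applies Theorem \ref{generalizakuhlm} to the extension $(L/K,v)$. You instead apply Theorem \ref{generalizakuhlm} directly to the subextension $(L/K_i,v)$ with generator $\alpha_i$ and $H=\gal(L/K_i)$ the full group; since $I_H$ depends only on $L$, $v$ and the automorphisms in $H$ (not on the chosen base field), this instantiation is legitimate, and your computation $S_H=0-D_i=-D_i$ is the same as the paper's after its reduction step. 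A notable byproduct of your route is that it never uses \textbf{(GE)}: that hypothesis is really there so that one generator $\theta$ of $L/K$ serves all indices $i$ simultaneously, which the paper exploits afterwards (e.g.\ in Remark \ref{depn3famif2} comparing $\#S_\theta$ with $\#{\rm Ram}(\mathcal E)$), whereas your proof establishes the proposition as stated without it. The points you flag as needing care --- that $L/K_i$ is an immediate (indeed unibranched, so that \textbf{(HC)} holds via the remark after Lemma \ref{lemakapland}) defect AS extension of degree $p$ --- are exactly the facts the paper also uses implicitly, citing \cite{NNASD} for the structure of $\gal(L/K)$ and the linear disjointness; your level of justification there matches the paper's.
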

\begin{proof}
It follows from \cite{NNASD} that $\gal(L/K)\simeq C_p\times\ldots\times C_p$ is generated by $\sigma_i$, $1\leq i\leq n$, where
\[
\sigma_i(\alpha_i)=\alpha_i+1\mbox{ and }\sigma_i(\alpha_j)=\alpha_j\mbox{ if }i\neq j.
\]

Take $c_1,\ldots,c_n\in K$ such that \eqref{equationinte} is satisfied and set $\theta=c_1\alpha_1+\ldots+c_n\alpha_n$. For each $i$, $1\leq i\leq n$, consider the subgroup $H=\langle \sigma_i\rangle$ of $\gal(L/K)$. Then $K_i=K_H$ and $|H|=p$. On the other hand, since
\[
\theta-c_i\alpha_i\in K_i=K_H
\]
we obtain that $D_1(\theta,K_H)=D_1(\alpha_i,K_H)$. Hence, Theorem \ref{generalizakuhlm} implies that
\[
I_H=I_{-D_i}.
\]
\end{proof}
\begin{Obs}\label{depn3famif2}
It is easy to see that the example in Section \ref{exampleeasy} satisfies the conditions of Proposition \ref{ramificiudle} (for $\alpha_1=\alpha$ and $\alpha_2=\beta$). Moreover, by construction we have
\[
d_1(\beta, K(\alpha))=0^{-}\mbox{ and } d_1(\beta, K(\alpha))=-1^{-}.
\]
In particular, by Proposition \ref{ramificiudle} and the construction of $\theta=\alpha+b\beta$ we have
\[
\#S_\theta=1<2\leq \#{\rm Ram}(\mathcal E).
\]
\end{Obs}

\begin{Cor}
There exist Galois extensions of valued fields $\mathcal E=(L/K,v)$ for which
\[
\dep(\mathcal E)<\#{\rm Ram}(\mathcal E). 
\]
\end{Cor}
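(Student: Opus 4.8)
The plan is to exhibit a concrete extension $\mathcal E$ witnessing the strict inequality, using the two explicit examples that have already been constructed in the paper. The cleanest route is to take the example from Section \ref{monster}: there $K$ is the field built with the $a_\ell,b_\ell$, and $\alpha,\beta$ are AS elements, linearly disjoint over $K$, with $\theta=\alpha+t\beta$ a generator of $L=K(\alpha,\beta)$, $\#S_\theta=2$, and $\dep(\theta)=1$ (Lemma \ref{dephtonfisone}). Since $\dep(\mathcal E)=\min\{\dep(\t)\mid L=K(\t)\}\le\dep(\theta)=1$, and $\dep$ is always $\ge1$ for a nontrivial extension, we get $\dep(\mathcal E)=1$.

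First I would note that $\mathcal E=(L/K,v)$ is Galois: it is the compositum of two independent... more precisely, by \cite{NND} the conjugates of $\theta$ are $\theta+t\F_p+\F_p$, so $\gal(L/K)\cong C_p\times C_p$ generated by $\sigma_\alpha,\sigma_\beta$ with $\sigma_\alpha(\alpha)=\alpha+1$, $\sigma_\alpha(\beta)=\beta$ and symmetrically. Then I would invoke Proposition \ref{ramificiudle}: the field $K$ of Section \ref{monster} satisfies \textbf{(GE)} (it contains $\overline{\F_p}$, an infinite algebraic extension of $\F_p$), and $\alpha,\beta$ are defect AS elements linearly disjoint over $K$, so for each $i$ there is a subgroup $H_i$ with $I_{H_i}=I_{-D_i}$ where $D_1=D_1(\alpha,K(\beta))$ and $D_2=D_1(\beta,K(\alpha))$. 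By Remark \ref{examplediferamificidela} these two distance sets are genuinely different: $d_1(\alpha,K(\beta))=0^-$ while $d_1(\beta,K(\alpha))=-1^-$, so $-D_1$ and $-D_2$ are distinct final segments of $vL$, hence $I_{-D_1}\ne I_{-D_2}$, which forces $\#{\rm Ram}(\mathcal E)\ge 2$. Combining, $\dep(\mathcal E)=1<2\le\#{\rm Ram}(\mathcal E)$, and the corollary follows. (Alternatively one can appeal directly to Corollary \ref{linealrydisoj}, which as stated in the introduction already packages this deduction for the Section \ref{monster} example; but spelling it out via Proposition \ref{ramificiudle} and Remark \ref{examplediferamificidela} is more self-contained.)

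The main obstacle, and the only nontrivial point, is verifying that the two ramification ideals are actually distinct rather than merely indexed by distinct subgroups — i.e.\ ruling out the coincidence $I_{H_1}=I_{H_2}$ that the paper explicitly warns can happen. This reduces to checking $I_{-D_1}\ne I_{-D_2}$ as $\VR_L$-fractional ideals, which in turn reduces to the fact that the final segments $\min S(\theta,H_i)-D_1(\theta,K_{H_i})$ differ; here the asymmetry recorded in Remark \ref{examplediferamificidela} (one distance set has supremum $0^-$, the other $-1^-$) does the job, since these cuts live at incomparable ``heights'' in $vL=\Gamma$. I would make sure to phrase this in terms of the ideals $I_S$ for $S$ a final segment: $I_{S}=I_{S'}$ iff $S$ and $S'$ generate the same fractional ideal, and two final segments with distinct infima (or distinct principal cuts $0^-$ vs.\ $-1^-$) give distinct ideals. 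Once that is in place the argument is a three-line assembly of already-proven facts.

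\begin{proof}
Consider the extension $\mathcal E=(L/K,v)$ from Section \ref{monster}, where $K$ is the field constructed there, $\alpha,\beta\in\overline K$ are the AS elements with $AS(\alpha)=t^{-\pi}$, $AS(\beta)=t^{-p-1}$, and $L=K(\alpha,\beta)$. By the description of the conjugates of $\theta=\alpha+t\beta$ via \cite{NND}, the extension $L/K$ is Galois with $\gal(L/K)\cong C_p\times C_p$. Since $K$ contains $\overline{\F_p}$, the field $(K,v)$ satisfies \textbf{(GE)}, and $\alpha,\beta$ are defect AS elements linearly disjoint over $K$; thus Proposition \ref{ramificiudle} applies and yields subgroups $H_1,H_2$ of $\gal(L/K)$ with
\[
I_{H_1}=I_{-D_1}\quad\text{and}\quad I_{H_2}=I_{-D_2},
\]
where $D_1=D_1(\alpha,K(\beta))$ and $D_2=D_1(\beta,K(\alpha))$. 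By Remark \ref{examplediferamificidela} we have $d_1(\alpha,K(\beta))=0^-$ and $d_1(\beta,K(\alpha))=-1^-$, so the final segments $-D_1$ and $-D_2$ of $vL$ are distinct, and hence $I_{-D_1}\neq I_{-D_2}$ as $\VR_L$-fractional ideals. Both are contained in $\mathcal M_L$, so $\#{\rm Ram}(\mathcal E)\geq 2$.

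On the other hand, by Lemma \ref{dephtonfisone} we have $\dep(\theta)=1$ for the generator $\theta=\alpha+t\beta$ of $L/K$, so
\[
\dep(\mathcal E)=\min\{\dep(\t)\mid L=K(\t)\}\leq \dep(\theta)=1,
\]
and since $L\neq K$ we have $\dep(\mathcal E)=1$. Therefore
\[
\dep(\mathcal E)=1<2\leq\#{\rm Ram}(\mathcal E),
\]
as claimed.
\end{proof}
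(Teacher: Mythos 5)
Your proof is correct and follows essentially the same route as the paper: the same Section \ref{monster} example, Lemma \ref{dephtonfisone} for $\dep(\mathcal E)=1$, and Proposition \ref{ramificiudle} together with Remark \ref{examplediferamificidela} to produce two subgroups whose ramification ideals differ (the paper just records them explicitly as $\mathcal M_L$ and $\{b\in L\mid vb>1\}$, which is the concrete form of your observation that the final segments generated by $-D_1$ and $-D_2$ are $\Gamma_{>0}$ and $\Gamma_{>1}$).
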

\begin{proof}
Consider the extension $\mathcal E=(K(\theta)/K,v)$ of the example in Section \ref{monster}. We showed, in Lemma \ref{dephtonfisone}, that $\dep(\mathcal E)=1$. We will show that $\#{\rm Ram}(\mathcal E)\geq 2$.

Take $b\in k\setminus \F_p$ and $\theta'=\alpha+b\beta$. Then $\theta'$ is a generator of $L/K$. Moreover, by Remark \ref{examplediferamificidela} we have
\[
d_1(\alpha, K(\beta))=0^-\mbox{ and }d_1(\beta, K(\alpha))=-1^-.
\]
Moreover, take $H_1=\langle\sigma_1\rangle$, where $\sigma_1$ is defined by $\sigma_1(\alpha)=\alpha+1$ and $\sigma_1(\beta)=\beta$. Then we can apply Proposition \ref{ramificiudle} to obtain that
\[
I_{H_1}=\mathcal M_L.
\]
On the other hand, for the group $H_2=\langle \sigma_2\rangle$ where $\sigma_2$ is defined by $\sigma_2(\alpha)=\alpha$ and $\sigma_2(\beta)=\beta+1$ we obtain that
\[
I_{H_2}=\{b\in L\mid vb>1\}.
\]
\end{proof}

\begin{Cor}\label{linealrydisoj}
Suppose that $(K,v)$ is Henselian and satisfies the condition \textbf{(GE)}. Assume that $L$ is the compositum of finitely many linearly disjoint defect AS extensions $K(\alpha_1),\ldots,K(\alpha_n)$ such that $d_1(\alpha_i)=0^-$ for every $i$, $1\leq i\leq n$. For $L=K(\alpha_1,\ldots,\alpha_n)$ and $\mathcal E=(L/K,v)$ we have
\[
{\rm Ram}(\mathcal E)=\{\mathcal M_L\}.
\]
\end{Cor}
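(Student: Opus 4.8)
The plan is to prove the stronger statement that $I_H=\mathcal M_L$ for \emph{every} nontrivial subgroup $H$ of $\gal(L/K)$; since $\mathcal M_L\subseteq\mathcal M_L$, this at once gives ${\rm Ram}(\mathcal E)=\{\mathcal M_L\}$. Throughout I would use, as in the proof of Proposition~\ref{ramificiudle}, that by \cite{NNASD} one has $\gal(L/K)\cong C_p\times\cdots\times C_p$ with the usual generators $\sigma_i$, that $(L/K,v)$ is immediate, and that $K$ as well as every field between $K$ and $L$ is Henselian, being a finite extension of the Henselian field $K$. First I would record that $I_H\subseteq\mathcal M_L$ for all nontrivial $H$: given $\sigma\neq\op{id}$ and $b\in L^*$, write $b=ku$ with $k\in K^*$, $vk=vb$ (possible since $vL=vK$) and $u\in\VR_L$ a unit; because $L/K$ is immediate, $\sigma$ fixes $Lv=Kv$ pointwise, so $\overline{\sigma u}=\overline u$ and $v(\sigma b-b)=v(\sigma u-u)+vb>vb$. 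Thus every value $v\big((\sigma b-b)/b\big)$ is positive, so $I_\sigma\subseteq\mathcal M_L$ and $I_H=\bigcup_{\sigma\in H}I_\sigma\subseteq\mathcal M_L$. Since any nontrivial $H$ contains a subgroup of order $p$ (every nonidentity element has order $p$), it now suffices to prove $I_H=\mathcal M_L$ when $|H|=p$.

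So fix $H=\langle\sigma\rangle$ with $\sigma=\prod_i\sigma_i^{a_i}$, and choose $i_0$ with $a_{i_0}\neq0$. Using that the Artin--Schreier operator $AS$ is $\F_p$-linear, together with the Galois correspondence between Artin--Schreier subextensions of $L/K$ and $\F_p$-subspaces of $W:=\langle AS(\alpha_1),\dots,AS(\alpha_n)\rangle\subseteq K/AS(K)$, I would replace $\alpha_1,\dots,\alpha_n$ by new linearly disjoint defect AS elements $\alpha_1',\dots,\alpha_n'$ such that $\alpha_1'=\alpha_{i_0}$, that $\alpha_2',\dots,\alpha_n'$ are Artin--Schreier roots of a basis of $\ker(\sigma\colon W\to\F_p)$ (so that $K(\alpha_2',\dots,\alpha_n')$ is the fixed field $K_H$), and that $\sigma^{\,a_{i_0}^{-1}}$ is precisely the generator $\sigma_1'$ of $\gal(L/K)$ associated with this new family; one checks each $K(\alpha_j')/K$ is immediate of degree $p$ over the Henselian field $K$, hence a defect AS extension, so the new family still satisfies the hypotheses of Proposition~\ref{ramificiudle}. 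Applying that proposition to the index $1$ and the subgroup $\langle\sigma_1'\rangle=H$ yields $I_H=I_{-D}$ with $D=D_1(\alpha_1',K_H)=D_1(\alpha_{i_0},K_H)$. To finish I would identify $D$: since $K\subseteq K_H$ and $d_1(\alpha_{i_0})=0^-$, the set $D$ contains $D_1(\alpha_{i_0},K)$, which is cofinal in $\Gamma_{<0}$; while $D\subseteq\Gamma_{<0}$, for if some $c\in K_H$ had $v(\alpha_{i_0}-c)\geq0$ then (replacing $c$ by $c+c_0$ with $c_0\in\VR_{K_H}$ reducing to $\overline{\alpha_{i_0}-c}$, using $L/K_H$ immediate and $K_H$ Henselian, to arrange $v(\alpha_{i_0}-c)>0$) Hensel's lemma applied to $x^p-x-AS(\alpha_{i_0}-c)$, whose reduction is $\prod_{j\in\F_p}(x-j)$, would give $\alpha_{i_0}-c\in K_H$, hence $\alpha_{i_0}\in K_H$, contradicting $\sigma\alpha_{i_0}\neq\alpha_{i_0}$. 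Therefore $d_1(\alpha_{i_0},K_H)=0^-$, so $-D$ is a set of positive values with infimum $0$, which forces $I_{-D}=\mathcal M_L$; hence $I_H=\mathcal M_L$, and combined with the first paragraph we get $I_H=\mathcal M_L$ for every nontrivial $H$, i.e.\ ${\rm Ram}(\mathcal E)=\{\mathcal M_L\}$.

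The step I expect to require most care is the change of Artin--Schreier generators: one must simultaneously arrange that the new elements are linearly disjoint \emph{defect} AS elements over $K$ (so that Proposition~\ref{ramificiudle} is applicable), that one of them is one of the \emph{original} generators $\alpha_{i_0}$ (so that $d_1(\cdot)=0^-$ is available for it), and that the distinguished generator $\sigma_1'$ of the new family generates precisely $H$. Everything else --- the inclusion $I_H\subseteq\mathcal M_L$, the passage from subgroups of order $p$ to arbitrary nontrivial subgroups, and the computation $d_1(\alpha_{i_0},K_H)=0^-$ --- is either immediate from the cited results or a routine Hensel's lemma argument.
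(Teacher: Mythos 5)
Your argument is correct, but it reaches the key inclusion $\mathcal M_L\subseteq I_H$ by a genuinely different and considerably heavier route than the paper. The paper simply takes the generator $\theta=c_1\alpha_1+\cdots+c_n\alpha_n$ supplied by \textbf{(GE)}, notes that $\sigma\theta-\theta\in c_1\F_p+\cdots+c_n\F_p$ has value $0$ for every nontrivial $\sigma$, so that $v\bigl((\sigma(\theta-a)-(\theta-a))/(\theta-a)\bigr)=-v(\theta-a)$ for all $a\in K$, and then uses $d_1(\alpha_i)=0^-$ together with $v(c_i)=0$ to conclude $d_1(\theta)\geq 0^-$, so these values are positive and coinitial in $(vL)_{>0}$; this gives $\mathcal M_L\subseteq I_\sigma$ for every nontrivial $\sigma$ in one stroke, with no reduction to order-$p$ subgroups, no change of Artin--Schreier generators, and no appeal to Proposition \ref{ramificiudle} or Theorem \ref{generalizakuhlm}. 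Your detour does work: $AS(\alpha_{i_0})$ completes a basis of $\ker(\sigma\colon W\to\F_p)$ to a basis of $W$ precisely because $a_{i_0}\neq0$; the new degree-$p$ subextensions are immediate (hence defect AS extensions) because $L/K$ is; $\sigma=(\sigma_1')^{a_{i_0}}$, so $\langle\sigma_1'\rangle=H$; and the Hensel's-lemma computation of $d_1(\alpha_{i_0},K_H)=0^-$ is sound. What your route buys is the sharper identification $I_H=I_{-D_1(\alpha_{i_0},K_H)}$ for each order-$p$ subgroup; what it costs is the full apparatus of Theorem \ref{generalizakuhlm}. Both arguments lean on the same background facts from \cite{NNASD} (that $L/K$ is immediate with $\gal(L/K)\simeq C_p\times\cdots\times C_p$), so you are not assuming anything the paper does not; and your opening verification that $I_H\subseteq\mathcal M_L$ is fine --- it just reproves \cite[Lemma 3.5]{Topics}, which the paper cites instead.
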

\begin{proof}
Since $(K,v)$ is Henselian and $(L/K,v)$ is immediate, by \cite[Lemma 3.5]{Topics} we have
\[
v\left(\frac{\sigma b-b}{b}\right)>0\mbox{ for every }b\in L^*. 
\]
In particular, $I_H\subseteq \mathcal M_L$ for every $H\subseteq \gal(L/K)$. 

Take $c_1,\ldots,c_n\in K$ such that \eqref{equationinte} is satisfied and set $\theta=c_1\alpha_1+\ldots+c_n\alpha_n$. Then for every $\sigma\in \gal(L/K)$ and every $a\in K$ we have
\[
v\left(\frac{\sigma(\theta-a)-(\theta-a)}{\theta-a}\right)=-v(\theta-a).
\]
Since $d_1(\alpha_i)=0^-$ and $v(c_i)=0$ for every $i$, $1\leq i\leq n$, we deduce that for every $\delta<0$ there exists $a\in K$ such that $v(\theta-a)>\delta$ (i.e., $d_1(\theta)\geq 0^-$). Consequently, $\mathcal M_L\subseteq I_H$ and the result now follows.
\end{proof}

\subsection{Examples from \cite{NNASD}} \label{computationcassesAsd} For this section we will denote the Hahn field $\overline{\F_p}((t^\Q))$ by $\mathbb H$. Let $K$ be a subfield of $\mathbb H$, containing $\overline{\F_p}(t)$, which is perfect, Henselian and has no finite limits. Take elements $\alpha,\theta,\eta\in \mathbb H$ such that
\[
AS(\alpha)=t^{-1}, AS(\theta)=\alpha\mbox{ and } AS(\eta)=\alpha^2.
\]
Take $c\in \overline{\F_p}\subseteq K$ such that $c^p-c=1$. Take $\gamma,\omega\in \mathbb H$ defined by
\[
\gamma=\theta-c\alpha\mbox{ and }\omega=\eta-\alpha(\theta+\gamma).
\]
Set
\[
L'=K(\gamma)\mbox{ and }L=K(\alpha)
\]
and
\[
M'=K(\omega), M_0=K(\theta)\mbox{ and }M=K(\eta).
\]
Finally, set $N=K(\theta,\eta)$.
These fields fit in the following diagram.
\begin{center}
	\setlength{\unitlength}{4mm}
	\begin{picture}(12.4,14)

\put(7,13){\begin{footnotesize}$N$\end{footnotesize}}

\put(0,8){\begin{footnotesize}$M'$\end{footnotesize}}
\put(3.5,8){\begin{footnotesize}$\ldots$\end{footnotesize}}
\put(7,8){\begin{footnotesize}$M_0$\end{footnotesize}}
\put(10.5,8){\begin{footnotesize}$\ldots$\end{footnotesize}}
\put(14,8){\begin{footnotesize}$M$\end{footnotesize}}

\put(4,4){\begin{footnotesize}$L'$\end{footnotesize}}
\put(7,4){\begin{footnotesize}$\ldots$\end{footnotesize}}
\put(11,4){\begin{footnotesize}$L$\end{footnotesize}}

\put(7,0){\begin{footnotesize}$K$\end{footnotesize}}

		\put(4.5,5){\line(1,1){2.5}}
		\put(10.5,5){\line(-1,1){2.5}}
		\put(7,1){\line(-1,1){2.5}}
		\put(8,1){\line(1,1){2.5}}
		\put(3.5,5){\line(-1,1){2.5}}
		\put(12,5){\line(1,1){2.5}}
		
		\put(1,9){\line(3,2){5.5}}
		\put(14,9){\line(-3,2){5.5}}
		\put(7.5,9){\line(0,1){3}}
	\end{picture}
\end{center}\bs
The next result summarizes some results from \cite{NNASD}.
\begin{Prop}
With the field extensions described above, we have:
\begin{description}
\item[(i)] All the degree $p$ extensions above are defect AS extensions (hence are Galois).
\item[(ii)] The extensions $N/K$ and $M_0/K$ are Galois, but $M'/K$ and $M/K$ are not.
\item[(iii)] ${\rm Gal}(M_0/K)$ is cyclic if and only if $p=2$. 
\item[(iv)] If $p>2$, then ${\rm Gal}(N/K)\simeq\left(C_p\times C_p\right)\rtimes  C_p$. 
\end{description}
\end{Prop}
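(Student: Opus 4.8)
The plan is to prove item (i) directly from the construction and the hypotheses on $K$, and to read items (ii)--(iv) off the explicit Galois-theoretic analysis in \cite{NNASD}; the four hypotheses on $K$ (perfect, Henselian, $\overline{\F_p}(t)\subseteq K$, no finite limits) are precisely what makes each link of the diagram behave as drawn. The first step is to record the Artin--Schreier identities of the generators. Using $AS(\alpha)=t^{-1}$, $AS(\theta)=\alpha$, $AS(\eta)=\alpha^2$, the identity $\alpha^p=\alpha+t^{-1}$, and $c^p-c=1$, a one-line computation gives $AS(\gamma)=-c^pt^{-1}\in K$, so $\gamma$ is an AS element over $K$; likewise $AS(\theta)=\alpha$ and $AS(\eta)=\alpha^2$ lie in $L=K(\alpha)$, and an analogous computation expresses $AS(\omega)$ as an element of $L'$. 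The same identities fix the inclusions in the diagram: $\alpha=\theta^p-\theta$ and $\gamma=\theta-c\alpha$ lie in $K(\theta)$, so $L,L'\subseteq M_0$ and hence $M_0=LL'$; similarly $L\subseteq M,M'$ and $M_0,M',M\subseteq N$.

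For item (i), each link is visibly a degree-$p$ AS extension as soon as the relevant AS polynomial is irreducible, and such an extension is automatically Galois because $\F_p\subseteq\overline{\F_p}\subseteq K$. It then remains to show that each link is a \emph{defect} extension. Since $K$ is Henselian, so is every intermediate field, hence every link is unibranched and its defect equals $p/(ef)$; thus defect $p$ is equivalent to immediateness. Immediateness holds because the residue field stays $\overline{\F_p}$ (all the fields sit inside $\mathbb{H}=\overline{\F_p}((t^\Q))$) and the value group does not grow: $K$ being perfect forces $vK\supseteq\Z[1/p]$, which already contains $v(\alpha),v(\theta),v(\eta),v(\gamma),v(\omega)$. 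Finally, irreducibility of each AS polynomial --- equivalently, that the generator is not already in the field below --- is where the \emph{no finite limits} hypothesis enters, via the $d_1=0^-$ computations of Sections \ref{exampleeasy} and \ref{monster} (if the generator lay in the field below, that cut would instead be $\infty^-$).

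For item (ii), normality of $M_0/K$ and $N/K$ follows by exhibiting all $K$-conjugates of the generator inside the field: a $K$-embedding sends $\alpha\mapsto\alpha+a$ for some $a\in\F_p$, then $\theta\mapsto\theta+ac+b$ because $AS(\theta+ac)=\alpha+a$ (this is the step that uses $c^p-c=1$), and $\eta\mapsto\eta+2a\theta+a^2c+d$ because $AS(2a\theta+a^2c)=2a\alpha+a^2$; all of these lie in $M_0$, respectively in $N$, since $c\in\overline{\F_p}\subseteq K$. For $M/K$ and $M'/K$ one instead exhibits a conjugate outside the field: for $a\neq0$ a $K$-conjugate of $\eta$ is a root of $x^p-x-(\alpha+a)^2$, whose roots are $\eta+2a\theta+a^2c+\F_p$, and none of these lie in $M$ because $\theta\notin M$ (a degree count, using $M\neq M_0$ and $[M:K]=[M_0:K]$); the same argument with $\omega$ handles $M'$.

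Items (iii) and (iv) I would deduce from \cite{NNASD}: one writes $\gal(N/K)$ in the coordinates above and reads off the group law, the decisive feature being the cross term $2a\theta$ in the action on $\eta$, which controls the commutators --- it is nontrivial when $p>2$, producing the nonabelian group $(C_p\times C_p)\rtimes C_p$, and it vanishes when $p=2$ (where, moreover, $\alpha^2=\alpha^p=\alpha+t^{-1}$ collapses the top layer and forces $\eta\in K(\theta)$), which is what makes $\gal(M_0/K)$ cyclic exactly for $p=2$. I expect this last step to be the main obstacle: identifying which statements of \cite{NNASD} are being quoted and matching their conventions, in particular the degree computation $[N:K]=p^3$ for $p>2$ (linear disjointness of the three AS layers) and the separate, degenerate bookkeeping needed when $p=2$; by contrast items (i) and (ii) are routine once the Artin--Schreier identities above are in hand.
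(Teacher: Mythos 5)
The paper offers no argument for this Proposition at all: it is introduced with ``The next result summarizes some results from \cite{NNASD}'' and is a pure citation. So your attempt to prove (i) and (ii) directly is already a different (more self-contained) route. For those two items your sketch is essentially workable: the computation $AS(\gamma)=-c^pt^{-1}$ is correct, immediateness follows because $Kv=\overline{\F_p}$ is algebraically closed and $vK$ is $p$-divisible ($K$ perfect) --- note that the right argument is $p$-divisibility of the whole group $vK$, not merely that $v(\alpha),v(\theta),\dots$ happen to lie in $\Z[1/p]$ --- and the conjugate computations $\theta\mapsto\theta+ac+b$, $\eta\mapsto\eta+2a\theta+a^2c+d$ are correct. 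But be aware that the irreducibility/degree statements you need ($\theta\notin L$, $\eta\notin M_0$, $[N:K]=p^3$, $M\neq M_0$) cannot be extracted from ``$K$ has no finite limits'' as directly as you suggest: the intermediate fields $L, M_0,\dots$ do \emph{not} inherit that property ($\alpha$ itself has infinite support below $0$), so these facts are genuine inputs from \cite{NNASD}, not routine consequences.

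The genuine gap is item (iii). The cross term $2a\theta$ you invoke lives in the action on $\eta$, hence concerns $\gal(N/K)$ (item (iv), where your explanation matches the commutator relations $\sigma\tau\sigma^{-1}=\iota^{-2}\tau$ recorded later in the paper); it has no bearing on $\gal(M_0/K)$, since $M_0=K(\theta)$ does not involve $\eta$. Worse, the mechanism you do have for $M_0$ contradicts the claimed cyclicity: the $K$-conjugates of $\theta$ are $\theta+ac+b$ with $a,b\in\F_p$ and $c\in\overline{\F_p}\subseteq K$ fixed by every automorphism, so the automorphisms compose as translations by the group $\F_p+\F_pc$, which is elementary abelian of order $p^2$ for \emph{every} $p$, including $p=2$ (where the minimal polynomial is $x^4+x+t^{-1}$ with root set $\theta+\F_4$). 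Your parenthetical about $\alpha^2=\alpha+t^{-1}$ collapsing the top layer explains why $\eta\in M_0$ and $N=M_0$ when $p=2$, but it does not produce an element of order $4$ in $\gal(M_0/K)$. So your proposed proof of (iii) does not go through, and this item must either be taken verbatim from \cite{NNASD} after carefully matching conventions there, or be re-examined; it cannot be deduced from the data assembled in your sketch.
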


We suppose now that $p>2$. We will consider the elements $\sigma,\iota,\tau\in \gal(N/K)$ defined by
\begin{displaymath}
\begin{array}{cccc}
\sigma(\theta)=\theta+c &\sigma(\eta)=\eta+2\theta+c &\sigma(\omega)=\omega&\sigma(\alpha)=\alpha+1\\[10pt]
\iota(\theta)=\theta &\iota(\eta)=\eta+1 &\iota(\omega)=\omega+1&\iota(\alpha)=\alpha\\[10pt]
\tau(\theta)=\theta+1 &\tau(\eta)=\eta &\tau(\omega)=\omega-2\alpha&\tau(\alpha)=\alpha
\end{array}.
\end{displaymath}
\begin{Lema}
We have the following.
\begin{description}
\item[(i)] $\iota$ commutes with $\sigma$ and $\tau$.
\item[(ii)] $\sigma\tau\sigma^{-1}=\iota^{-2}\tau$.
\item[(iii)] $\sigma\tau\sigma^{-1}\tau^{-1}=\iota^{-2}\Lr \tau\sigma^{-1}\tau^{-1}=\iota^{-2}\sigma^{-1}$.
\item[(iv)] $\tau^{-1}\sigma\tau\sigma^{-1}=\iota^{-2}\Lr \tau^{-1}\sigma\tau=\iota^{-1}\sigma$.
\item[(v)] $\sigma\tau=\iota^{-2}\tau\sigma$.
\end{description}
\end{Lema}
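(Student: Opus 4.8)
The goal is to verify the five commutation relations among $\sigma$, $\iota$, $\tau$ from the explicit formulas for their action on the generators $\alpha,\theta,\eta,\omega$ of $N/K$. Since $\gal(N/K)$ acts faithfully on $\{\alpha,\theta,\eta,\omega\}$ (indeed $N=K(\theta,\eta)$ and $\alpha=AS(\theta)$, $\omega=\eta-\alpha(\theta+\gamma)$ are expressible in these), it suffices in each case to compute the action of both sides of the claimed identity on $\alpha$, $\theta$, $\eta$, and $\omega$ and check they agree. So the whole proof is a finite, mechanical verification; the only care needed is bookkeeping with the nonlinear term $\alpha(\theta+\gamma)$ appearing in $\omega$.

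For part (i) I would check $\iota\sigma=\sigma\iota$ and $\iota\tau=\tau\iota$ on each generator. On $\alpha$ and $\theta$ this is immediate since $\iota$ fixes both. On $\eta$: $\iota\sigma(\eta)=\iota(\eta+2\theta+c)=\eta+1+2\theta+c$ while $\sigma\iota(\eta)=\sigma(\eta+1)=\eta+2\theta+c+1$; similarly for $\tau$, using that $\tau$ fixes $\eta$. On $\omega$ one uses $\iota(\omega)=\omega+1$ and that $\sigma$ fixes $\omega$, $\tau(\omega)=\omega-2\alpha$ with $\iota$ fixing $\alpha$. For parts (ii) and (v) (which are equivalent, since $\iota$ is central by (i), so $\sigma\tau\sigma^{-1}=\iota^{-2}\tau \Leftrightarrow \sigma\tau=\iota^{-2}\tau\sigma$), the substantive computation is on $\theta$, $\eta$, and $\omega$. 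On $\theta$: $\sigma\tau(\theta)=\sigma(\theta+1)=\theta+c+1$ and $\iota^{-2}\tau\sigma(\theta)=\iota^{-2}\tau(\theta+c)=\theta+c+1$. On $\eta$: $\sigma\tau(\eta)=\sigma(\eta)=\eta+2\theta+c$, and $\iota^{-2}\tau\sigma(\eta)=\iota^{-2}\tau(\eta+2\theta+c)=\iota^{-2}(\eta+2\theta+2+c)=\eta-2+2\theta+2+c$, which matches. The key case is $\omega$: here one must track $\tau(\omega)=\omega-2\alpha$ together with $\sigma(\alpha)=\alpha+1$, and similarly expand $\gamma=\theta-c\alpha$ and $\alpha(\theta+\gamma)=\alpha(2\theta-c\alpha)$ under $\sigma$ and $\tau$; the relation $c^p-c=1$ and arithmetic in characteristic $p$ (so $2\ne 0$ since $p>2$) make the $\iota^{-2}$ factor appear exactly.

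Parts (iii) and (iv) are then purely formal group-theoretic consequences of (i) and (ii): given $\iota$ central and $\sigma\tau\sigma^{-1}=\iota^{-2}\tau$, conjugating and rearranging gives $\sigma\tau\sigma^{-1}\tau^{-1}=\iota^{-2}$, hence $\tau\sigma^{-1}\tau^{-1}=\iota^{-2}\sigma^{-1}$ (multiply on the left by $\tau\sigma^{-1}$ and use centrality of $\iota$), which is (iii); and $\tau^{-1}\sigma\tau\sigma^{-1}=\iota^{-2}$ follows by taking inverses and using $\iota$ central, giving $\tau^{-1}\sigma\tau=\iota^{-1}\sigma$ — wait, more carefully: from $\sigma\tau\sigma^{-1}\tau^{-1}=\iota^{-2}$ one gets $\tau^{-1}\sigma^{-1}\tau\sigma=\iota^{2}$ by inverting, so $\tau^{-1}\sigma\tau\sigma^{-1}$ is its $\sigma$-conjugate-type rearrangement; I would just state these as immediate manipulations in the group generated by $\sigma,\tau$ with central $\iota$ of order $p$. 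The only genuine obstacle is the $\omega$-computation in (ii)/(v): expanding $\iota,\sigma,\tau$ on $\omega=\eta-\alpha(\theta+\gamma)$ correctly requires careful handling of the quadratic term and the constant $c$, and this is where an arithmetic slip would occur — which is presumably exactly the "mistake in an earlier version" the acknowledgement refers to.
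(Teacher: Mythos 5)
Your proposal is correct and follows the only reasonable route: the paper states this lemma without proof, and the intended argument is exactly the mechanical verification you describe (note that since $N=K(\theta,\eta)$, it already suffices to check each relation on $\theta$ and $\eta$, and the $\omega$-row of the table is given, so no expansion of the quadratic term $\alpha(\theta+\gamma)$ is actually needed for the lemma itself). The one place you wavered, item (iv), is worth resolving explicitly: from (i) and (ii) one gets $\tau^{-1}\sigma\tau=\iota^{-2}\sigma$ (multiply $\sigma\tau=\iota^{-2}\tau\sigma$ on the left by $\tau^{-1}$ and use centrality of $\iota$), which can also be confirmed directly on $\eta$; so the exponent $-1$ in the printed conclusion of (iv) appears to be a typo for $-2$, and your hesitation there was justified rather than a gap in your argument.
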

For simplicity, for any of the fields of the diagram we will denote only by $\mathcal M$ the corresponding maximal ideal.
\begin{Obs}
All the extensions that appear in the diagram are immediate and unibranched. Hence, in order to prove that $I_H=\mathcal M$ for some subgroup $H$ it is enough to show that there exist $b\in L^*$ such that $v\left(\frac{\sigma b-b}{b}\right)$ are arbitrarily close to zero for every $\sigma\in H$.
\end{Obs}

We discuss now what happens with the Galois extensions of degree $p$ in the diagram. The following result follows from \cite{KR}. We present a short proof here for sake of completeness.
\begin{Prop}
The extensions $(L/K,v)$ and $(L'/K,v)$ have only one ramification ideal and it is the maximal ideal.
\end{Prop}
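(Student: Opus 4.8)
The statement to prove is that for the defect AS extensions $(L/K,v)$ and $(L'/K,v)$ of degree $p$ — here $L = K(\alpha)$ with $AS(\alpha) = t^{-1}$, and $L' = K(\gamma)$ with $\gamma = \theta - c\alpha$ — the set ${\rm Ram}(\mathcal E)$ consists of a single element, namely $\mathcal M_L$ (resp. $\mathcal M_{L'}$). The plan is to use the machinery already assembled in this section, in particular Corollary \ref{linealrydisoj} and Proposition \ref{ramificiudle}, together with the classification of defect AS extensions as dependent/independent from Section \ref{notation}.

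First I would observe that since the extensions are Galois of prime degree $p$, the only nontrivial subgroup of $\gal(L/K)$ is the whole group, so ${\rm Ram}(\mathcal E)$ has at most one element and it suffices to identify $I_{\gal(L/K)} = I_\sigma$ for a generator $\sigma$. Because $(L/K,v)$ is immediate and unibranched, the Remark preceding Theorem \ref{generalizakuhlm} (via \cite[Lemma 3.2(1)]{KR}) gives $v\left(\frac{\sigma b - b}{b}\right) > 0$ for all $b \in L^*$, so $I_\sigma \subseteq \mathcal M_L$ automatically. For the reverse inclusion I would invoke Theorem \ref{generalizakuhlm} with $H = \gal(L/K)$ (here $|H| = p$ and (HC) holds since the extension is immediate unibranched), which yields $I_H = I_{S_H}$ where $S_H = \min S(\theta,H) - D_1(\theta, K_H)$; since $K_H = K$ this is $v(\sigma\alpha - \alpha) - D_1(\alpha, K) = 0 - D_1(\alpha,K)$. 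The key computational input is then that $D_1(\alpha,K)$ has no maximum and is cofinal in $\Gamma_{<0}$ — equivalently $d_1(\alpha) = 0^-$, i.e., the AS extension is \emph{independent} — which forces $I_{-D_1(\alpha,K)} = \mathcal M_L$. The same argument applies verbatim to $L' = K(\gamma)$ once one checks $d_1(\gamma,K) = 0^-$; here one uses that $\gamma = \theta - c\alpha$ with $AS(\gamma) = AS(\theta) - c\,AS(\alpha) + (\text{correction})$, and a short direct estimate using that $K$ has no finite limits shows $D_1(\gamma,K)$ is cofinal in $\Gamma_{<0}$ with no maximum.

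The main obstacle I anticipate is the explicit verification that $d_1(\alpha,K) = 0^-$ and $d_1(\gamma,K) = 0^-$ — that is, producing for each $\delta < 0$ an element of $K$ within distance $> \delta$ of $\alpha$ (resp. $\gamma$), while showing no element of $K$ lies at distance $\geq 0$. For $\alpha$ this should follow the pattern of Section \ref{exampleeasy}: the partial sums $c_\ell = a_1 + \cdots + a_\ell$ (appropriately chosen lifts) approximate $\alpha$ arbitrarily well, while a value-group argument (no element of $K$ can have the required value) rules out a maximum; the "no finite limits" hypothesis on $K$ is what guarantees the relevant truncations stay in $K$ and the obstruction is genuine. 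For $\gamma$ the same scheme works but bookkeeping the correction term in $AS(\gamma)$ requires care. Alternatively — and this is probably the cleanest route — one can simply cite \cite{KR}: the paper's introduction already records that for a Galois extension of degree and defect $p$ one has $\#{\rm Ram}(\mathcal E) = 1$ with $\mathcal I = I_{v(\sigma\theta-\theta) - D_1(\theta,K)}$ (equation \eqref{mainconjst}), and independence of the AS extension is exactly the condition $D_1 = 0^-$ that identifies this single ideal as the maximal ideal; since the text says "We present a short proof here for sake of completeness," I would present the short self-contained argument above and remark that it also follows directly from \cite{KR}.
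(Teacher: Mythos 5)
Your proposal is correct and follows essentially the same route as the paper: the paper's proof is exactly the one-line computation $v\bigl(\tfrac{\sigma(\alpha-a)-(\alpha-a)}{\alpha-a}\bigr)=-v(\alpha-a)$ for $a\in K$, combined with $d_1(\alpha)=0^-$ and the automatic inclusion $I_\sigma\subseteq\mathcal M$ from immediacy and unibranchedness. The only cosmetic difference is that you invoke the full equality $I_H=I_{S_H}$ of Theorem \ref{generalizakuhlm}, whereas only the easy inclusion $I_{S_H}\subseteq I_H$ (i.e.\ the direct computation above) is actually needed here.
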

\begin{proof}
For an element in $\gal(N/K)$ we will denote its restriction to $L'$ or $L$ by the same symbol. We will only compute the case $G=\gal(L/K)=\langle\sigma\rangle$. For any $a\in K$ we have
\[
v\left(\frac{\sigma (\alpha-a)-(\alpha-a)}{\alpha-a}\right)=v\left(\frac{1}{\alpha-a}\right)=-v(\alpha-a).
\]
Since $d_1(\alpha)=0^-$ we deduce that $I_{\langle\sigma\rangle}=\mathcal M$.
\end{proof} 

We proceed now with the Galois extension of degree $p^2$ in the diagram.
\begin{Prop}
The extension $M_0/K$ admits only one ramification ideal and it is the maximal ideal.
\end{Prop}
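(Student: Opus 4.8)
The plan is to use the observation recorded above: since every extension in the diagram is immediate and unibranched, to prove $I_H=\mathcal M$ for a subgroup $H\le\gal(M_0/K)$ it is enough to produce, for each $\sigma\in H\setminus\{\mathrm{id}\}$, elements $b\in M_0^\ast$ with $v\bigl(\tfrac{\sigma b-b}{b}\bigr)$ positive but arbitrarily close to $0$, and I would take such $b$ of the form $\theta-a$ with $a\in K$. First I would describe the Galois action on $\theta$. Since $\iota$ fixes $\theta$, hence $M_0$, and $|\langle\iota\rangle|=p=[N:M_0]$ (recall $[N:K]=p^3$ and $[M_0:K]=p^2$), we get $\gal(N/M_0)=\langle\iota\rangle$, which is normal because $\iota$ commutes with $\sigma$ and $\tau$, so $\gal(M_0/K)=\langle\sigma,\iota,\tau\rangle/\langle\iota\rangle$ is generated by the restrictions of $\sigma,\tau$, acting by $\theta\mapsto\theta+c$ and $\theta\mapsto\theta+1$. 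As $c^p-c=1\neq0$ we have $c\notin\F_p$, so $1,c$ are $\F_p$-independent; hence the orbit of $\theta$ is $\{\theta+w\mid w\in\F_p+\F_p c\}$ (which has $p^2$ elements, matching $[M_0:K]$), and every $\rho\in\gal(M_0/K)\setminus\{\mathrm{id}\}$ satisfies $\rho(\theta)=\theta+\lambda_\rho$ with $0\neq\lambda_\rho\in\overline{\F_p}$, in particular $v(\lambda_\rho)=0$. Consequently, for any $a\in K$ and $b=\theta-a$,
\[
v\Bigl(\frac{\rho b-b}{b}\Bigr)=v\Bigl(\frac{\rho\theta-\theta}{\theta-a}\Bigr)=v(\lambda_\rho)-v(\theta-a)=-v(\theta-a),
\]
a value that does not even depend on which nontrivial $\rho$ was chosen.

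The heart of the argument — and the step I expect to cost the most work — is the claim $d_1(\theta,K)=0^-$: there exist $a\in K$ with $v(\theta-a)$ negative and arbitrarily close to $0$, while no $a\in K$ has $v(\theta-a)\ge0$. I would prove this by the same kind of explicit computation inside $\mathbb H$ used in Section \ref{monster}. Taking the root of $AS(\theta)=\alpha$ in $\mathbb H$ compatible with the root $\alpha=\sum_{j\ge1}t^{-1/p^j}$ of $AS(\alpha)=t^{-1}$, one obtains $\theta=\sum_{k\ge1}\alpha^{1/p^k}=\sum_{m\ge2}(m-1)\,t^{-1/p^m}$, whose support consists of negative exponents accumulating at $0$ and is infinite in every interval $(-\delta,0)$. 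Since $K$ is perfect it contains each $t^{-1/p^m}$, so the partial sums $\theta_n=\sum_{m=2}^{n}(m-1)t^{-1/p^m}$ lie in $K$ and satisfy $v(\theta-\theta_n)\to0^-$; and since $K$ has no finite limits, an $a\in K$ with $v(\theta-a)\ge0$ would force ${\rm trn}_0(a)=\theta$ to have infinite support, a contradiction. This establishes $d_1(\theta,K)=0^-$. (Alternatively one could argue through the tower $K\subset K(\alpha)\subset M_0$ starting from $d_1(\alpha,K)=0^-$, but the direct computation seems cleanest, and it only uses properties of $K$ rather than of the intermediate fixed fields.)

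Finally I would assemble the conclusion. Fix $\rho\in\gal(M_0/K)\setminus\{\mathrm{id}\}$ and put $H=\langle\rho\rangle$. By the displayed identity together with $d_1(\theta,K)=0^-$, the numbers $v\bigl(\tfrac{\rho b-b}{b}\bigr)=-v(\theta-a)$, with $b=\theta-a$ and $a\in K$, are positive and come arbitrarily close to $0$; moreover the single element $b=\theta-a$ works simultaneously for every nontrivial element of $\gal(M_0/K)$. By the observation above this gives $I_H=\mathcal M$ for every such $H$, and then $I_{\gal(M_0/K)}\supseteq I_H=\mathcal M$, hence $I_{\gal(M_0/K)}=\mathcal M$ as well. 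Since every nontrivial subgroup of $\gal(M_0/K)$ contains some $\langle\rho\rangle$, and all the ideals $I_{H'}$ are contained in $\mathcal M$, we conclude $I_{H'}=\mathcal M$ for every nontrivial $H'$, i.e. ${\rm Ram}(M_0/K)=\{\mathcal M\}$. The only nonformal ingredient in this plan is the valuation-theoretic computation $d_1(\theta,K)=0^-$; everything else is essentially group theory plus the criterion from the preceding remark.
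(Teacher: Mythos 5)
Your proposal is correct and follows essentially the same route as the paper: every nontrivial element of $\gal(M_0/K)$ sends $\theta$ to $\theta+ic+j$ with $(i,j)\neq(0,0)$, hence shifts $\theta$ by a constant of value $0$, and combining $v\bigl(\tfrac{\rho(\theta-a)-(\theta-a)}{\theta-a}\bigr)=-v(\theta-a)$ with $d_1(\theta,K)=0^-$ gives $I_H=\mathcal M$ for every nontrivial subgroup. The only difference is that you supply an explicit Hahn-series verification of $d_1(\theta,K)=0^-$ (correct, up to noting that the coefficients $m-1$ vanish mod $p$ when $m\equiv1\pmod p$, which does not affect the conclusion), whereas the paper simply quotes this fact.
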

\begin{proof}
For an element in $\gal(N/K)$ we will denote its restriction to $M_0$ by the same symbol. We have $M_0=K(\theta)$ and
\[
\gal(M_0/K)=\langle\sigma,\tau\rangle\simeq C_p\times C_p.
\]
Since $d_1(\theta)=0^{-}$, we have $I_G=\mathcal M$ is the largest ramification ideal. For $a\in M_0$ we have
\begin{equation}\label{equasigma}
v\left(\frac{\sigma(\theta-a)-(\theta-a)}{\theta-a}\right)=v\left(\frac{\sigma\theta-\theta}{\theta-a}\right)=v\left(\frac{c}{\theta-a}\right)=-v(\theta-a).
\end{equation}
We also have
\begin{equation}\label{equatau}
v\left(\frac{\tau(\theta-a)-(\theta-a)}{\theta-a}\right)=v\left(\frac{\tau\theta-\theta}{\theta-a}\right)=v\left(\frac{1}{\theta-a}\right)=-v(\theta-a).
\end{equation}
By what we said before, we deduce from \eqref{equasigma} and \eqref{equatau} that
\[
I_{\langle\sigma\rangle}=\mathcal M=I_{\langle\tau\rangle}.
\]

Finally, all the other subgroups of degree $p$ of $G$ are of the form $\langle \sigma^i\tau\rangle$ or $\langle\sigma\tau^j\rangle$. In all those cases, we have $I_H=\mathcal M$ because
\[
(\sigma^i\tau)(\theta)=\theta + 1+ ic\mbox{ and }\sigma\tau^j(\theta)=\theta+c+j.
\]
\end{proof}

\begin{Obs}\label{depn3famif1}
It was shown in \cite[Lemma 4.4]{NNASD} that if ${\rm char}(K)=2$, then $\dep(M_0/K,v)=2$. In particular, this provides an example of an extension $\mathcal E=(L/K,v)$ for which
\[
\#{\rm Ram}(\mathcal{E})=1<2=\dep(\mathcal E).
\]
\end{Obs}
Finally, we discuss what happens with the degree $p^3$ extension in the diagram.

\begin{Prop}
The extension $N/K$ admits only one ramification ideal and it is the maximal ideal.
\end{Prop}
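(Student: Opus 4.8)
The plan is to follow the same strategy that worked for $M_0/K$ and the degree $p$ extensions: since $N/K$ is immediate and unibranched, by the Remark it suffices to produce, for each nontrivial subgroup $H$ of $G=\gal(N/K)$, an element $b\in N^*$ with $v\left(\frac{\sigma b-b}{b}\right)$ arbitrarily close to $0$ for every $\sigma\in H\setminus\{id\}$; this forces $I_H=\mathcal M$ and hence ${\rm Ram}(\mathcal E)=\{\mathcal M\}$. The backbone of the argument is Theorem \ref{generalizakuhlm} together with Lemma \ref{lemakapland}: whenever $|H|\le p$, $(L/K_H,v)$ is immediate (so $D_1(\theta_H,K_H)$ has no maximum for a generator $\theta_H$ of $N$ over $K_H$), and $(H,\theta_H)$ satisfies \textbf{(HC)} (which holds automatically for unibranched immediate Galois extensions by the Remark after the definition of \textbf{(HC)}), we get $I_H=I_{S_H}$ with $S_H=\min S(\theta_H,H)-D_1(\theta_H,K_H)$. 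So the task reduces to exhibiting, for each such $H$, a generator $\theta_H$ with $d_1(\theta_H,K_H)=0^-$, because then $S_H$ is cofinal with $0^+$ and $I_{S_H}=\mathcal M$.

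First I would enumerate the subgroups of $G\simeq (C_p\times C_p)\rtimes C_p$ up to conjugacy, using the commutation relations in the preceding Lemma (items (i)--(v)). The normal subgroup $A=\langle\iota,\tau\rangle\cong C_p\times C_p$ fixes... more precisely I need to identify, for each subgroup $H$, the fixed field $K_H$ among the fields $K,L,L',M',M_0,M,N$ (and their conjugates) in the diagram, and a convenient generator. For the cyclic subgroups of order $p$ contained in the center or in $A$ — e.g. $\langle\iota\rangle$, $\langle\tau\rangle$, $\langle\iota^i\tau\rangle$ — the fixed fields are the intermediate fields of the bottom part of the diagram ($M_0$, $M$, $M'$, $L$, $L'$-type), and $N$ is generated over each by one of $\alpha,\theta,\eta,\gamma,\omega$ (or a small combination); one checks $d_1=0^-$ of that generator over the fixed field directly, exactly as in \eqref{equasigma}--\eqref{equatau}, using $d_1(\alpha)=d_1(\theta)=0^-$ and the analogous facts for $\eta,\gamma,\omega$ coming from \cite{NNASD} and the "no finite limits" property of $K$. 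For cyclic subgroups of order $p$ not inside $A$ (the "$C_p$ on top", e.g. $\langle\sigma\rangle$ and its conjugates $\langle\iota^j\tau^k\sigma\rangle$) the fixed field is a conjugate of $L$-type or an $L'$-type field, and again $N$ is generated over it by a suitable element with $d_1=0^-$; here the relations $\sigma\tau=\iota^{-2}\tau\sigma$ etc. are used to pin down conjugacy classes so that only finitely many genuinely different computations are needed.

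For subgroups $H$ of order $p^2$ (hence index $p$, so $K_H$ is one of the degree-$p$ subfields $L$, $L'$, or a conjugate), I can no longer invoke Theorem \ref{generalizakuhlm} directly since $|H|=p^2>p$; instead I would compute $I_H=\bigcup_{\sigma\in H\setminus\{id\}}I_\sigma$ by hand, showing each $I_\sigma=\mathcal M$ via an explicit $b$ (a monomial-like element of $N$ built from $\alpha,\theta,\eta$) for which $v(\sigma b-b)-v(b)\to 0$. Because $N/K_H$ has degree $p^2$, $H$ itself is elementary abelian or cyclic of order $p^2$ (depending on which degree-$p$ subfield is fixed and on $p$), and one computes $\sigma b-b$ using the explicit action on $\alpha,\theta,\eta$ given in the table — the key point each time being that $d_1$ of the relevant generator over $K_H$ is $0^-$, so one can approximate it by elements of $K_H$ of value tending to $0^+$. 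Finally, for $H=G$ itself one takes $b=\alpha$ (or $\theta$): $v\left(\frac{\sigma\alpha-\alpha}{\alpha}\right)=-v(\alpha-a)$-type estimates and $d_1(\alpha)=0^-$ give $I_G=\mathcal M$, the largest possible ramification ideal, and this is consistent with everything else.

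The main obstacle I expect is the bookkeeping for the order-$p^2$ subgroups: there one cannot lean on Theorem \ref{generalizakuhlm}, and one must directly analyze, for a non-cyclic group generated by two of $\sigma,\iota,\tau$, how $v(\sigma b - b)$ behaves for each of the $p^2-1$ nontrivial $\sigma$ simultaneously, making sure a single $b$ (or at least a family indexed by the target value approaching $0$) works — in particular handling the "twisted" elements like $\omega=\eta-\alpha(\theta+\gamma)$ whose $\sigma$-image involves cross terms. A secondary nuisance is keeping the conjugacy classification of subgroups correct so as not to miss a case; here the relation $\sigma\tau=\iota^{-2}\tau\sigma$ and its consequences in the Lemma are exactly what controls this, so I would state the subgroup list explicitly before doing any valuation computation.
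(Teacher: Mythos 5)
Your overall shape is right (reduce to producing, for each nontrivial subgroup $H$, elements $b\in N^*$ with $v\bigl(\frac{\rho b-b}{b}\bigr)$ arbitrarily close to $0$ for some $\rho\in H$), but you miss the one genuinely nontrivial case, and it is exactly the case your plan would fail on. For $H=\langle\iota\rangle$ the fixed field is $M_0=K(\theta)$ and the relevant generator is $\eta$; the "analogous fact for $\eta$ from \cite{NNASD}" is $d_1(\eta,K)=\left(-\frac{1}{p^2}\right)^-$, \emph{not} $0^-$. So if you only approximate $\eta$ by elements of $K$ (or rely on the pattern of \eqref{equasigma}--\eqref{equatau}), the values $v\bigl(\frac{\iota b-b}{b}\bigr)=-v(\eta-a)$ stay bounded away from $0$ by $\frac{1}{p^2}$, and you would only get the ideal $\{c\mid vc>\frac1{p^2}\}\subsetneq\mathcal M$. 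The paper's proof hinges on approximating $\eta$ by elements of $N$ itself that are fixed by $\iota$: it sets $\beta=t^{-\frac2p}+2t^{-\frac1p}\alpha^{\frac1p}$ (note $\alpha^{\frac1p}=\alpha-t^{-\frac1p}\in L$), writes $\eta$ as a series in $p$-power roots of $\beta$, and takes $b=\eta-\beta_n$ with $v(b)=-\frac{1}{p^{n+1}}\to 0^-$ and $\iota b=b+1$, whence $v\bigl(\frac{\iota b-b}{b}\bigr)=\frac1{p^{n+1}}\to 0^+$. Equivalently, in your Theorem \ref{generalizakuhlm} framework you would need $d_1(\eta,M_0)=0^-$, and establishing that \emph{is} this computation; asserting it "follows directly" is the gap.

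Conversely, the obstacle you single out as the main one (order-$p^2$ subgroups and finding a single $b$ working for all $p^2-1$ nontrivial elements simultaneously) is not an obstacle at all: $I_H=\bigcup_{\rho\in H}I_\rho$ is a union, and $I_H\subseteq\mathcal M$ automatically since the extension is Henselian and immediate, so it suffices to find \emph{one} nontrivial $\rho\in H$ with $I_\rho=\mathcal M$. This is how the paper finishes: any nontrivial $\rho$ is a word in $\sigma,\tau,\iota$ with at least one nonzero exponent, and the three base computations (for $\sigma$, $\tau$ via $d_1(\alpha)=d_1(\theta)=0^-$, and for $\iota$ via the $\beta_n$ above) then produce the required $b$'s. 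No conjugacy classification or case-by-case enumeration of subgroups is needed.
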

\begin{proof}
By similar arguments as before we have
\[
I_G=I_{\langle\sigma\rangle}=I_{\langle\tau\rangle}=\mathcal M.
\]
Let us compute
\[
I_{\langle\iota\rangle}=\left(\frac{\iota b-b}{b}\mid b\in N^*\right).
\]
Set
\[
\beta= t^{-\frac{2}{p}}+2t^{-\frac{1}{p}}\alpha^{\frac 1p},
\]
so that
\[
\eta=\beta{^\frac{1}{p}}+2\beta^{\frac{1}{p^2}}+\ldots+n\beta^{-\frac{1}{p^n}}+\ldots
\]
In particular, $d_1(\eta)=\left(-\frac{1}{p^2}\right)^-$. Take
\[
\beta_n=\beta^{\frac{1}{p}}+\ldots+(n-1)\beta^{\frac{1}{p^{n-1}}}\mbox{ and }b=\eta-\beta_n.
\]
Since $\iota\beta=\beta$ and $\iota\eta=\eta+1$, we have $\iota b=b+1$. Consequently,
\[
v\left(\frac{\iota b-b}{b}\right)=v\left(\frac{1}{b}\right)=\frac{1}{p^{n+1}}.
\]
Since these values are arbitrarily close to $0$ we deduce that $I_{\langle \iota\rangle}=\mathcal M$.

Since $G=\langle \sigma,\tau,\iota\rangle$, for each subgroup $H$ of $G$, fix a non-zero element $\rho\in H$. Since $\rho$ is a composition of powers of $\sigma,\tau$ and $\iota$, at least one different than zero, in a similar way to was done above, we can find elements $b$ such that $v\left(\frac{\rho b-b}{b}\right)$ is arbitrarily close to $0$. Hence $I_H=\mathcal M$.
 
\end{proof}

\end{document}